\numberwithin{equation}{section}
\newtheorem{theorem}[equation]{Theorem}
\newtheorem{lemma}[equation]{Lemma}
\newtheorem{proposition}[equation]{Proposition}
\theoremstyle{definition}
\newtheorem{definition}[equation]{Definition}
\author{Leonidas Daskalakis} 
\address[Leonidas Daskalakis]{Institute of Mathematics,
Polish Academy of Sciences,
\'Sniadeckich 8,
00-656 Warszawa, Poland}
\email{ldaskalakis@impan.pl}
\begin{document}
\title{Pointwise ergodic theorems for non-conventional bilinear averages along $(\lfloor n^c\rfloor,-\lfloor n^c\rfloor)$}
\maketitle
\begin{abstract}For every $c\in(1,23/22)$ and every probability dynamical system $(X,\mathcal{B},\mu,T)$ we prove that for any $f,g\in L^{\infty}_{\mu}(X)$ the bilinear ergodic averages
\[
\frac{1}{N}\sum_{n=1}^Nf(T^{\lfloor n^c\rfloor}x)g(T^{-\lfloor n^c\rfloor}x)\quad\text{converge for $\mu$-a.e. $x\in X$.}
\]
In fact, we consider more general sparse orbits $(\lfloor h(n)\rfloor,-\lfloor h(n)\rfloor)_{n\in\mathbb{N}}$, where $h$ belongs to the class of the so-called $c$-regularly varying functions. This is the first pointwise result for bilinear ergodic averages taken along deterministic sparse orbits where modulation invariance is present.
\end{abstract}
\section{Introduction}
In 1990 Bourgain \cite{BDR} established that for any measure-preserving system $(X,\mathcal{B},\mu,T)$ with finite measure and any $f,g\in L_{\mu}^{\infty}(X)$ we have that
\begin{equation}\label{BourgAv}
\lim_{N\to\infty}\frac{1}{N}\sum_{n=1}^Nf(T^nx)g(T^{-n}x)\quad\text{converges for $\mu$-a.e. $x\in X$.}
\end{equation}
By standard arguments relying on H\"older's inequality and the maximal ergodic theorem this result can be immediately extended to functions $f\in L^{p_1}_{\mu}(X)$ and $g\in L^{p_2}_{\mu}(X)$ where $\frac{1}{p_1}+\frac{1}{p_2}\le 1$, and by appealing to the bilinear inequality of Lacey \cite{Lacey32} one obtains the result even for exponents satisfying $\frac{1}{p_1}+\frac{1}{p_2}<3/2$, see also Theorem~1.16 in \cite{KTM}.

Recently, there has been substantial progress in understanding pointwise phenomena in ergodic theory in the bilinear as well as the multilinear setting. More specifically, Krause, Tao and Mirek \cite{KTM} established pointwise convergence for the following ergodic averages
\[
\frac{1}{N}\sum_{n=1}^Nf(T^nx)g(T^{P(n)}x)\text{,}
\]
where $P\in \mathbb{Z}[n]$ has degree $d\ge 2$. In 2024 the first multilinear result in that direction appeared \cite{KPMW}, and pointwise convergence was established for averages of the form
\[
\frac{1}{N}\sum_{n=1}^Nf_1(T_1^{P_1(n)}x)\cdots f_k(T_k^{P_k(n)}x)\text{,}
\]
where $P_1,\dotsc,P_k\in\mathbb{Z}[n]$ have distinct degrees and $T_1,\dotsc,T_k$ are commuting invertible measure-preserving transformations.

Both results rely heavily on the fact that, loosely speaking, the orbits considered introduce a certain kind of ``curvature''; this is visible from the distinct degrees assumption. 

Currently, establishing pointwise convergence for averages of the form
\begin{equation}\label{toyprob}
\frac{1}{N}\sum_{n=1}^Nf(T^{P(n)}x)g(T^{-P(n)}x)\text{,}
\end{equation}
where $P\in\mathbb{Z}[n]$ has degree $d\ge 2$, seems to be out of reach, and the techniques from the aforementioned works cannot be applied since ``curvature'' is not present here. In some sense, such ergodic averages share some similarities with Bourgain's bilinear averages $\ref{BourgAv}$, and modulation invariance phenomena ought to be handled for addressing such pointwise convergence problems.

We establish the following bilinear pointwise convergence result in the direction of $\ref{toyprob}$, for $P(n)=\lfloor n^c\rfloor$ and $c\in(1,23/22)$.
\begin{theorem}\label{MainNc}
Assume $c\in(1,23/22)$. Let $(X,\mathcal{B},\mu)$ be a probability space and $T\colon X\to X$ an invertible $\mu$-invariant transformation. Then for every $f,g\in L_{\mu}^{\infty}(X)$ we have that
\begin{equation}\label{ptconv}
\lim_{N\to\infty}\frac{1}{N}\sum_{n=1}^Nf(T^{\lfloor n^c\rfloor} x)g(T^{-\lfloor n^c\rfloor} x)\text{ exists for $\mu$-a.e. $x\in X$.}
\end{equation}
\end{theorem}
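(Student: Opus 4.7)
The plan is to apply Calder\'on's transference principle to reduce \eqref{ptconv} to pointwise convergence of the discrete bilinear averages
\[
B_N(f,g)(x) := \frac{1}{N}\sum_{n=1}^{N} f(x-\lfloor n^c\rfloor)\, g(x+\lfloor n^c\rfloor)
\]
for $f,g \in \ell^\infty(\mathbb{Z})$ with compact support. By a standard density argument combined with the bilinear maximal inequality of Lacey \cite{Lacey32}, the desired pointwise convergence will follow once one establishes an oscillation (or jump) inequality with $o(J^{1/2})$ constants for $B_N$ at $\ell^2(\mathbb{Z})\times \ell^2(\mathbb{Z})\to\ell^1(\mathbb{Z})$ along any lacunary sequence of scales. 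The bilinear Fourier multiplier of $B_N$ equals $\psi_N(\eta-\xi)$ where $\psi_N(\theta) := \frac{1}{N}\sum_{n=1}^N e^{2\pi i\lfloor n^c\rfloor \theta}$; its dependence on only the single variable $\eta-\xi$ encodes the modulation invariance of the problem and places it outside the curvature-based framework of \cite{KTM,KPMW}, squarely inside the regime of Bourgain's bilinear theorem \eqref{BourgAv}.

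The central analytic step is a circle-method decomposition $\psi_N = \psi_N^{\mathrm{maj}} + \psi_N^{\mathrm{min}}$. Rewriting $\sum_n e^{2\pi i\lfloor n^c\rfloor \theta}$ as a weighted exponential sum over the image integers $m \approx n^c$ (with Jacobian $c^{-1}m^{1/c-1}$), and combining a truncated Fourier expansion of the sawtooth $\{n^c\}$ with high-order van der Corput estimates, one establishes $\|\psi_N^{\mathrm{min}}\|_{L^\infty(\mathbb{T})} \lesssim N^{-\delta(c)}$ for some $\delta(c)>0$ precisely when $c<23/22$. This particular threshold is dictated by the balance between the van der Corput cancellation and the losses from truncating the sawtooth series. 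On the major arcs, $\psi_N^{\mathrm{maj}}$ is localized near rationals with small denominators and, modulo Gauss sum factors, matches the continuous multiplier $N^{-1}\int_0^N e^{2\pi i t^c\theta}\,dt$, which via the substitution $s=t^c$ reduces to a smoothed version of Bourgain's kernel $N^{-1}\sum_{n\le N}e^{2\pi i n\theta}$ driving \eqref{BourgAv}.

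The proof then splits along $B_N = B_N^{\mathrm{maj}} + B_N^{\mathrm{min}}$. For $B_N^{\mathrm{maj}}$, the approximation by Bourgain's kernel lets one transfer a.e.\ convergence from \eqref{BourgAv} by means of a bilinear Rademacher--Menshov argument over lacunary frequency shells together with a paraproduct-type decomposition. For $B_N^{\mathrm{min}}$, the pointwise decay $N^{-\delta}$ is summed over lacunary scales and inserted into Lacey's bilinear bound \cite{Lacey32} to produce an absolutely convergent oscillation estimate at an intermediate $\ell^{p_1}\times\ell^{p_2}\to\ell^r$, which interpolates back to the target $\ell^\infty$ hypothesis.

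The principal obstacle, and the exact reason the method halts at $c<23/22$, is the modulation invariance flagged in the introduction: one cannot linearize $B_N^{\mathrm{min}}$ in a single input without destroying its symmetry, so the $L^\infty$ minor-arc decay of $\psi_N^{\mathrm{min}}$ must be transported into a genuinely bilinear, Lacey-type estimate, and the van der Corput gain has to outpace the time-frequency losses incurred in that transport. Improving the threshold $23/22$ would demand either sharper exponential sum bounds for $\sum_n e^{2\pi i\lfloor n^c\rfloor \theta}$ or stronger bilinear estimates for modulation-invariant multipliers than those currently available.
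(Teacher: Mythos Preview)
Your proposal is a strategy outline rather than a proof, and the step you yourself flag as ``the principal obstacle'' is never resolved. You assert that $\|\psi_N^{\mathrm{min}}\|_{L^\infty(\mathbb{T})}\lesssim N^{-\delta}$ and propose to ``transport'' this into a Lacey-type bilinear estimate, but for a multiplier of the form $\psi(\eta-\xi)$ smallness in $L^\infty$ does \emph{not} yield smallness of the associated bilinear operator in any $L^{p_1}\times L^{p_2}\to L^r$; this is exactly the modulation-invariance obstruction. Lacey's theorem \cite{Lacey32} furnishes boundedness of a fixed bilinear maximal operator, not a mechanism for turning pointwise multiplier decay into operator-norm decay. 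Likewise, the ``bilinear Rademacher--Menshov argument over lacunary frequency shells together with a paraproduct-type decomposition'' for the major-arc piece is asserted, not carried out; no such argument is known for modulation-invariant bilinear averages.

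The paper avoids the multiplier framework entirely. Working in physical space, it writes $1_{\mathbb{N}_h}(n)=(\varphi(n+1)-\varphi(n))+(\Phi(-\varphi(n+1))-\Phi(-\varphi(n)))$ with $\varphi=h^{-1}$ and $\Phi(x)=\{x\}-1/2$. The first piece produces weighted Bourgain averages, which are shown to have the same limit as $A_N$ by summation by parts and the Toeplitz theorem---no oscillation inequalities, no paraproducts. The second piece is expanded via the truncated Fourier series of $\Phi$, and the decisive input is a bound on the Gowers $U^3$-norm of the resulting error kernel $K_N$ (Lemma~\ref{smallgain}). A generalized von~Neumann inequality (Lemma~\ref{U3control}) then controls the full trilinear form $\sum_{x,n}f_0(x)f_1(x-n)f_2(x+n)K_N(n)$ directly, yielding $\|E_N^{(1)}\|_{L^1_\mu}\lesssim N^{-\chi}$ after Calder\'on transference; summing over lacunary scales gives $E_N^{(1)}\to 0$ a.e. The threshold $23/22$ emerges from the parameter balance inside the $U^3$ computation (van der Corput bounds from \cite{WT11} applied after the $U^2$-inverse step), not from a scalar minor-arc estimate. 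In short, the $U^3$-norm bound on the kernel is precisely the device that replaces the ``transport'' step you left open.
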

To the best of the author's knowledge this is the first pointwise bilinear result with deterministic sparse orbits and with modulation invariance present. Notably, a randomized variant of fractional powers in the spirit of the theorem above has been considered in \cite{probvariant} and although the technical part of our argument is quite more involved we encourage the reader to compare the strategy described in subsection~2.1 of the aforementioned paper with our own described in subsection~$\ref{strat}$.

We derive this result as a corollary of more general theorem allowing us to replace $\lfloor n^c\rfloor$ in $\ref{ptconv}$ with any of the following orbits 
\begin{equation}\label{examples1}
\lfloor n^c\log^{a}n\rfloor,\quad \lfloor n^ce^{a\log^b n}\rfloor,\quad \lfloor n^c\underbrace{\log\circ \cdots \circ \log n}_{k \text{ times}}\rfloor\text{,}
\end{equation}
where $a\in\mathbb{R}$, $b\in(0,1)$, $k\in\mathbb{N}$ are fixed. In fact, for $a>0$ even the case $c=1$ will be addressed for the sequences above. Before stating this theorem, we introduce the so-called $c$-regularly varying functions.
\begin{definition}[$c$-regularly varying functions] Let $c\in[1,2)$ and $x_0\ge 1$. We define the class of $c$-regularly varying functions $\mathcal{R}_c$ as the set of all functions $h\in\mathcal{C}^3\big([x_0,\infty)\to[1,\infty)\big)$ such that the following conditions hold:
\begin{itemize}
\item[i)]$h'>0$, $h''> 0$\text{ and }
\item[ii)] the function $h$ is of the form 
\[
h(x)=Cx^c\exp\bigg(\int_{x_0}^x\frac{\vartheta(t)}{t}dt\bigg)\text{,}
\]
where $C$ is a positive constant and $\vartheta\in\mathcal{C}^3\big([x_0,\infty)\to\mathbb{R}\big)$ satisfies
\[
\vartheta(x)\to 0\,\,\text{, }x\vartheta'(x)\to 0\,\,\text{, }x^2\vartheta''(x)\to 0\,\,\text{, }x^3\vartheta'''(x)\to 0\,\,\text{ as }x\to \infty\text{.}
\]
\item[iii)] If $c=1$, then $\vartheta$ will additionally be assumed to be positive, decreasing, and for every $\varepsilon>0$
\[
\vartheta(x)^{-1}\lesssim_{\varepsilon}x^{\varepsilon}\quad\text{and}\quad \lim_{x\to\infty}xh(x)^{-1}=0\text{.}
\]
Moreover, for $c=1$ we assume
\[
\lim_{x\to\infty}\frac{x\vartheta'(x)}{\vartheta(x)}=0\text{,}\quad\lim_{x\to\infty}\frac{x^2\vartheta''(x)}{\vartheta(x)}=0\text{,}\quad\lim_{x\to\infty}\frac{x^3\vartheta'''(x)}{\vartheta(x)}=0\text{.}
\]
\end{itemize}
\end{definition}
The family of $c$-regularly varying functions has been introduced in \cite{MMR} and \cite{WT11} and one may think of functions in $\mathcal{R}_c$ as appropriate perturbations of the fractional monomial $x^c$.  Notably, there exist $c$-regularly varying functions which do not belong to any Hardy field.
\begin{theorem}\label{MTHM}
Assume $c\in[1,23/22)$ and $h\in\mathcal{R}_c$. Let $(X,\mathcal{B},\mu)$ be a probability space and $T\colon X\to X$ an invertible $\mu$-invariant transformation. Then for every $f,g\in L_{\mu}^{\infty}(X)$ we have that
\begin{equation}\label{ptconvforh}
\lim_{N\to\infty}\frac{1}{N}\sum_{n=1}^Nf(T^{\lfloor h(n)\rfloor} x)g(T^{-\lfloor h(n)\rfloor} x)\text{ exists for $\mu$-a.e. $x\in X$.}
\end{equation}
Moreover, we have that
\begin{equation}\label{thelimit}
\lim_{N\to\infty}\frac{1}{N}\sum_{n=1}^Nf(T^{\lfloor h(n)\rfloor} x)g(T^{-\lfloor h(n)\rfloor} x)=\lim_{N\to\infty}\frac{1}{N}\sum_{n=1}^Nf(T^{n} x)g(T^{-n} x)\quad\text{for $\mu$-a.e. $x\in X$.}
\end{equation}
\end{theorem}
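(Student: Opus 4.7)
The plan is to compare the averages in \eqref{ptconvforh} to the classical Bourgain bilinear averages and to deduce both the existence of the limit and its identification from the known convergence of the latter. By Calder\'on's transference principle it suffices to prove the corresponding bilinear statement on $\mathbb{Z}$ for
\[
B_N^h(f,g)(x)=\frac{1}{N}\sum_{n=1}^Nf(x+\lfloor h(n)\rfloor)\,g(x-\lfloor h(n)\rfloor),\qquad B_N^{\mathrm{id}}(f,g)(x)=\frac{1}{N}\sum_{n=1}^Nf(x+n)\,g(x-n),
\]
and in view of Bourgain's pointwise theorem for $B_N^{\mathrm{id}}$ the whole task reduces to showing $B_N^h(f,g)-B_N^{\mathrm{id}}(f,g)\to 0$ pointwise a.e.\ as $N\to\infty$.

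The analytic heart of the argument is Fourier-analytic: both $B_N^h$ and $B_N^{\mathrm{id}}$ are bilinear Fourier multipliers whose symbols depend only on $\xi-\eta$, namely
\[
m_N^h(\theta)=\frac{1}{N}\sum_{n=1}^Ne^{2\pi i\theta\lfloor h(n)\rfloor},\qquad m_N^{\mathrm{id}}(\theta)=\frac{1}{N}\sum_{n=1}^Ne^{2\pi i\theta n}.
\]
This degenerate symbol structure is the source of modulation invariance: $B_N^{h}$ commutes with joint modulations $(f,g)\mapsto(e^{2\pi i\tau\cdot}f,\,e^{2\pi i\tau\cdot}g)$ up to a phase, so the Calder\'on--Zygmund and wave-packet techniques used in the curvature-rich polynomial settings of \cite{KTM,KPMW} are not directly applicable.

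I would next decompose $\mathbb{T}$ dyadically into major arcs around rationals with small denominators and their complementary minor arcs. On the major arcs, a stationary-phase/Euler--Maclaurin analysis exploiting the derivative conditions in the definition of $\mathcal{R}_c$ should produce a smooth main term in $m_N^h$ asymptotically matching the smoothed Fej\'er kernel associated to $m_N^{\mathrm{id}}$, making $m_N^h-m_N^{\mathrm{id}}$ a small smooth perturbation there. On the minor arcs, an iterated van der Corput argument adapted to $c$-regularly varying functions, in the spirit of \cite{MMR,WT11}, should yield quantitative cancellation $|m_N^h(\theta)|+|m_N^{\mathrm{id}}(\theta)|\lesssim N^{-\delta(c)}$ off a sparse frequency set. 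Combined with a time--frequency/wave-packet decomposition of bilinear Hilbert transform type applied to the symbol difference, this should give a bilinear maximal or oscillation inequality for $B_N^h-B_N^{\mathrm{id}}$. An $L^p$ bound of this form, together with verification of convergence on a dense class such as trigonometric polynomials (on which both averages reduce to the same Weyl limit), would yield $B_N^h-B_N^{\mathrm{id}}\to 0$ pointwise a.e., hence \eqref{thelimit} and \eqref{ptconvforh}.

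The main obstacle is the interplay of modulation invariance and the sparsity of the orbit. Because the symbol depends only on $\xi-\eta$, scalar Calder\'on--Zygmund multiplier theory is unavailable and one must carry out a genuine bilinear Hilbert transform style time--frequency analysis, so the minor-arc cancellation in $m_N^h-m_N^{\mathrm{id}}$ has to be strong enough to survive the wave-packet bookkeeping. Calibrating $\delta(c)$ in the van der Corput estimate under only the regularity afforded by $\mathcal{R}_c$ is almost certainly where the restriction $c<23/22$ enters; the third-derivative hypotheses built into the definition of $\mathcal{R}_c$ strongly suggest a third-order van der Corput step. This calibration, and the subsequent bilinear interpolation that must absorb the resulting loss, is the step I expect to be by far the most technically demanding.
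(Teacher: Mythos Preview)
Your proposal has a genuine gap in the central mechanism, and the route you sketch is not the one taken in the paper.

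First, the comparison with Bourgain's averages is not carried out by analyzing the bilinear multiplier symbols $m_N^h(\theta)$ and $m_N^{\mathrm{id}}(\theta)$ via a circle-method decomposition and a subsequent bilinear Hilbert transform/wave-packet analysis. Such an approach runs directly into the modulation invariance obstacle you yourself flag: a symbol depending only on $\xi-\eta$ cannot be handled by scalar multiplier theory, and invoking full Lacey--Thiele technology for a \emph{varying} family of symbols $m_N^h-m_N^{\mathrm{id}}$ with merely $N^{-\delta}$ minor-arc decay is not known to close. The paper circumvents this entirely. It rewrites the averages along $\lfloor h(n)\rfloor$ as weighted averages of $f(T^nx)g(T^{-n}x)$ using the identity $1_{\mathbb{N}_h}(n)=\lfloor-\varphi(n)\rfloor-\lfloor-\varphi(n+1)\rfloor$ for the inverse $\varphi=h^{-1}$, and splits the weight as $(\varphi(n+1)-\varphi(n))+(\Phi(-\varphi(n+1))-\Phi(-\varphi(n)))$ with $\Phi(x)=\{x\}-1/2$. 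The first piece produces a smoothly weighted version of Bourgain's averages, which converges to the same limit by summation by parts and the Toeplitz theorem --- no Fourier analysis at all. So the ``major-arc'' comparison you envision is replaced by an elementary real-variable argument on the dynamical system itself.

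Second, and this is the essential missing idea, the error term coming from the sawtooth $\Phi$ is not treated by time--frequency methods. After a truncated Fourier expansion of $\Phi$, the error has an explicit oscillatory kernel $K_N(n)$. The trilinear form $\sum_{x,n}f_0(x)f_1(x-n)f_2(x+n)K_N(n)$ is then controlled by repeated Cauchy--Schwarz/van der Corput differencing (a Gowers $U^3$-type bound): one shows the form is at most $N^{13/8}$ times an averaged $U^3$-expression in $K_N$, and then estimates the latter by reducing, via the $U^2$ inverse theorem, to exponential sums of the shape $\sum_n e(m_1\varphi(n)-m_2\varphi(n+h_3)+n\xi)$, which are handled by van der Corput using the third-derivative control built into $\mathcal{R}_c$. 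This is exactly where the threshold $c<23/22$ emerges, but it arises from balancing the $U^3$ numerology against the exponential-sum gain, not from any wave-packet bookkeeping. A single $L^1$ power-saving bound $\|E_N^{(1)}(f,g)\|_{L^1_\mu}\lesssim N^{-\chi}$ then follows by Calder\'on transference, and lacunary summation finishes. In short: the crucial device you are missing is the Gowers-norm control of the error kernel, which replaces the bilinear Hilbert transform analysis you propose and is what makes the modulation-invariant problem tractable.
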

The theorem above immediately implies Theorem~$\ref{MainNc}$ and establishes the analogous results corresponding to any of the orbits mentioned in $\ref{examples1}$ by appropriately choosing $h\in\mathcal{R}_c$. As mentioned earlier, standard arguments relying on H\"older's inequality and on the fact that the maximal function corresponding to the single averages is bounded on $L^p_{\mu}(X)$ for every $p\in(1,\infty]$, see \cite{ncfull} and \cite{WT11}, allow one to deduce $\ref{ptconvforh}$ for $f\in L_{\mu}^{p_1}(X)$, $g\in L_{\mu}^{p_2}(X)$ with $\frac{1}{p_1}+\frac{1}{p_2}< 1$ as a simple corollary of Theorem~$\ref{MTHM}$. Moreover, if $c\in(1,30/29)$, then the maximal operator associated with the single averages along such orbits is of weak-type (1,1), see \cite{WT11}, and with a similar argument one may cover the case $\frac{1}{p_1}+\frac{1}{p_2} \le1$ for such $c$'s.

\subsection{Strategy}\label{strat}  We wish to give a brief description of the strategy of our proof here. In Section~$\ref{mainreductioncrucial}$, we split our averaging operator to two pieces: a main term, which converges to the same limit as Bourgain's averages, see $\ref{BourgAv}$, and an error term. More specifically, we consider our ergodic averages in the following form
\[
B_N(f,g)(x)\coloneqq\frac{1}{|\mathbb{N}_h\cap [1,N]|}\sum_{n\in [1,N]}1_{\mathbb{N}_h}(n)f(T^nx)g(T^{-n}x)\text{,}
\] 
where $\mathbb{N}_h\coloneqq\{\lfloor h(n)\rfloor:\,n\in\mathbb{N}\}$. After exploiting the following formula $1_{\mathbb{N}_h}(n)=\lfloor -\varphi(n)\rfloor-\lfloor -\varphi(n+1)\rfloor$, where $\varphi$ is the compositional inverse of $h$, we may write
\[
1_{\mathbb{N}_h}(n)=\big(\varphi(n+1)-\varphi(n)\big)+\big(\Phi(-\varphi(n+1))-\Phi(-\varphi(n))\big)\text{,}
\]
where $\Phi(x)=\{x\}-1/2$. This decomposes our averages to $B_N(f,g)(x)=M_N(f,g)(x)+E_N(f,g)(x)$, where 
\[
M_N(f,g)(x)\coloneqq \frac{1}{|\mathbb{N}_h\cap [1,N]|}\sum_{n\in [1,N]}\big(\varphi(n+1)-\varphi(n)\big)f(T^nx)g(T^{-n}x)\text{,}
\]
and
\[
E_N(f,g)(x)\coloneqq\frac{1}{|\mathbb{N}_h\cap [1,N]|}\sum_{n\in [1,N]}\big(\Phi(-\varphi(n+1))-\Phi(-\varphi(n))\big)f(T^nx)g(T^{-n}x)\text{.}
\]
The fact that the weights in the averaging operator $M_N$ are appropriately well-behaving allows us to prove that such averages must converge to the same limit as Bourgain's.

It remains to address the error term, namely, to establish that $\lim_{N\to\infty}E_N(f,g)(x)=0$ for $\mu$-a.e. $x\in X$. We begin by exploiting a famous truncated Fourier series expansion for $\Phi$ with uniform bounds on its tail, see Lemma~$\ref{TrFourier}$. This approximation of $\Phi$ induces a decomposition for $E_N$ and we focus on each piece separately. The operator induced by the tail of the Fourier expansion can be treated straightforwardly, while the other requires a significant amount of work. Specifically, certain cancellation in the kernel defining the operator at hand ought to be exploited. This will be done in Section~$\ref{difSection}$ by certain Gowers norm-type bounds. After considering our averages on the integer shift system, we establish that our operator is controlled essentially by the $U^3$-norm of the kernel, and we prove that this norm has appropriate decay. This is done using Fourier-analytic methods since we may view the $U^3$-norm of our kernel as averages of $U^2$-norms of appropriate expressions, and we can immediately use the inverse theorem for the $U^2$-norm. The task of estimating the $U^3$-norm of our kernel reduces to the study of certain exponential sums. Finally, in Section~$\ref{finSection}$, we discuss how such estimates over the integers allow us to conclude.
\subsection{Notation} For any $x\in\mathbb{R}$ we use the standard notation
\[
\lfloor x\rfloor=\max\{n\in\mathbb{Z}:\,n\le x\}\text{,}\quad\{x\}=x-\lfloor x\rfloor\text{,} \quad\|x\|=\min\{|x-n|:\,n\in\mathbb{Z}\}\text{.}
\]
If $A,B$ are two nonnegative quantities, we write $A\lesssim B$ or $B \gtrsim A$  to denote that there exists a positive constant $C$ such that $A\le C B$. Whenever $A\lesssim B$ and $A\gtrsim B$, we write $A\simeq B$. Throughout the paper all the implicit constants appearing may depend on a fixed choice of $h\in\mathcal{R}_c$. We denote $e^{2\pi i x}$ by $e(x)$. For any $N\in\mathbb{N}$ we let $\mathbb{N}_{\ge N}\coloneqq\{n\in\mathbb{N}:\,n\ge N\}$ and $[N]\coloneqq\{1,2,\dotsc,N\}$. Let us note that $h(x)$ is not defined for $x<x_0$ but we abuse notation; we can let $h(x)$ take arbitrary values for $x\in[1,x_0]$ and all our results and estimates remain the same. Given a measurable space $(X,\mathcal{B})$, we call a function $f\colon X\to \mathbb{C}$ $1$-bounded if $f$ is measurable and $|f|\le 1$. For every function $f\colon\mathbb{Z}\to\mathbb{C}$ and $h_1\in\mathbb{Z}$ we define the difference function $\Delta_{h_1}f(x)=f(x)\overline{f(x+h_1)}$, and for every $s\in\mathbb{N}$ and $h_1,\dotsc,h_s\in\mathbb{Z}$ we define $\Delta_{h_1,\dotsc,h_s}f(x)=\Delta_{h_1}\dots\Delta_{h_s}f(x)$. For every $s\in\mathbb{N}_{\ge2}$ and every finitely supported $f\colon\mathbb{Z}\to\mathbb{C}$ we define the Gowers $U^s$-norm by
\[
\|f\|_{U^s}=\bigg(\sum_{x,h_1,\dotsc,h_s\in\mathbb{Z}}\Delta_{h_1,\dotsc,h_s}f(x)\bigg)^{1/2^s}\text{.}
\]
Finally, for every $N\in\mathbb{N}$ we define 
\[
\mu_N(n)=\frac{|\{(h_1,h_2)\in[N]:\,h_1-h_2=n\}|}{N^2}\text{,}
\]
and we note that $\mu_N(n)\lesssim N^{-1}1_{[-N,N]}(n)$.
\section*{Acknowledgments}
The author would like to thank Ben Krause, Michael Lacey and Christoph Thiele for several insightful conversations as well as Nikos Frantzikinakis and Borys Kuca for helpful comments and feedback. I would also like to thank Mariusz Mirek for useful discussions and for his constant support and encouragement.
\section{Main reduction}\label{mainreductioncrucial}
We fix $c\in[1,23/22)$ and $h\in\mathcal{R}_c$, and all implied constants may depend on these choices. We denote by $\varphi$ the compositional inverse of $h$, and we remind the reader that $\mathbb{N}_h=\{\lfloor h(n)\rfloor:\,n\in\mathbb{N}\}$. We note that we use the basic properties of $h$ and $\varphi$ as described in Lemma~2.6  and Lemma~2.14 from \cite{MMR}. Before performing our main reduction, we collect some standard useful results for handling fractional powers and for the sake of clarity we state Toeplitz theorem, a standard result which allows us to change weights in our averaging operators. 
\begin{lemma}\label{allowedtopass}
Assume $c\in[1,2)$, $h\in\mathcal{R}_c$ and $\varphi$ is its compositional inverse. Then there exists a positive constant $C=C(h)$ such that
\begin{equation}\label{indchange}
1_{\mathbb{N}_h}(n)=\lfloor -\varphi(n)\rfloor-\lfloor -\varphi(n+1)\rfloor\quad\text{for all $n\ge C$.}
\end{equation}
\end{lemma}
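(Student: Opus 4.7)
The plan is to translate $1_{\mathbb{N}_h}(n)$ into a question about counting integers in the short interval $[\varphi(n),\varphi(n+1))$, and then to verify that the interval is short enough to contain at most one integer. Since $h'>0$ and $\varphi$ is its compositional inverse, for $n$ large enough that $\varphi(n)\ge 1$ one has the chain of equivalences
\[
n\in\mathbb{N}_h\iff\exists\,m\in\mathbb{N}\colon\lfloor h(m)\rfloor=n\iff\exists\,m\in\mathbb{N}\colon\varphi(n)\le m<\varphi(n+1).
\]
On the other hand, the number of integers in any half-open interval $[a,b)$ is $\lceil b\rceil-\lceil a\rceil$, which via the identity $\lceil x\rceil=-\lfloor -x\rfloor$ equals $\lfloor -\varphi(n)\rfloor-\lfloor -\varphi(n+1)\rfloor$ for $a=\varphi(n)$ and $b=\varphi(n+1)$. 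So the desired formula follows once we know this count is at most $1$.

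The main obstacle, and really the only substantive step, is establishing $\varphi(n+1)-\varphi(n)<1$ for all $n$ larger than some constant $C=C(h)$. By the mean value theorem this reduces to $\varphi'(y)=1/h'(\varphi(y))\to 0$ as $y\to\infty$, equivalently $h'(x)\to\infty$. Differentiating the defining representation $h(x)=Cx^c\exp\bigl(\int_{x_0}^x\vartheta(t)/t\,dt\bigr)$ yields
\[
h'(x)=h(x)\cdot\frac{c+\vartheta(x)}{x}.
\]
For $c\in(1,2)$ the factor $h(x)/x\to\infty$ is immediate from $c>1$ and $\vartheta\to 0$, while for $c=1$ the extra hypotheses in part (iii) of the definition of $\mathcal{R}_c$, in particular $\lim_{x\to\infty}xh(x)^{-1}=0$ together with $\vartheta>0$, give the same conclusion. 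In practice these step-size estimates are already recorded in Lemmas~2.6 and 2.14 of \cite{MMR}, which I would quote directly rather than re-derive.

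Combining the two pieces, for every $n\ge C$ the interval $[\varphi(n),\varphi(n+1))$ contains either zero or one integer, so the integer count coincides with $1_{\mathbb{N}_h}(n)$, giving
\[
1_{\mathbb{N}_h}(n)=\lceil\varphi(n+1)\rceil-\lceil\varphi(n)\rceil=\lfloor -\varphi(n)\rfloor-\lfloor -\varphi(n+1)\rfloor,
\]
which is the claim. No oscillation or cancellation will be invoked beyond elementary monotonicity; the only delicate point is the uniform smallness of $\varphi(n+1)-\varphi(n)$ in the borderline case $c=1$, which is precisely what the additional conditions on $\vartheta$ in part (iii) of the definition of $\mathcal{R}_c$ are designed to secure.
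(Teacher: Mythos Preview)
Your proposal is correct and follows the standard argument that the paper defers to by citing Lemma~2.12 in \cite{MMR} rather than writing out a proof. The key ingredients---rewriting $n\in\mathbb{N}_h$ as an integer count in $[\varphi(n),\varphi(n+1))$, the identity $\lceil x\rceil=-\lfloor -x\rfloor$, and the eventual smallness $\varphi(n+1)-\varphi(n)<1$ via $h'(x)\to\infty$---are exactly what that reference contains, so your write-up is simply an explicit version of what the paper leaves implicit.
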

\begin{proof}
The proof is standard, see for example Lemma~2.12 \cite{MMR}.
\end{proof}

\begin{lemma}\label{TrFourier}
For every $M\in\mathbb{N}_{\ge 2}$ we have that 
\begin{equation}\label{splittored}
\Phi(x)\coloneqq\{x\}-1/2=\sum_{0<|m|\le M}\frac{1}{2\pi im}e(-mx)+g_M(x)\text{,}
 \end{equation}
 with $g_M(x)=O\Big(\min\Big\{1,\frac{1}{M\|x\|}\Big\}\Big)$. We also have that 
 \begin{equation}\label{errorgm}
 \min\bigg\{1,\frac{1}{M\|x\|}\bigg\}=\sum_{m\in\mathbb{Z}}b_me(mx)\text{,}
\end{equation}
 where $|b_m|\lesssim \min\Big\{\frac{\log M}{M},\frac{1}{|m|},\frac{M}{|m|^2}\Big\}$, and all the implied constants are absolute. 
\end{lemma}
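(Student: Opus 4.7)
The first identity is the classical truncated Fourier expansion of the sawtooth. I would begin by recalling that $\Phi(x)=\{x\}-1/2$ is $1$-periodic with Fourier coefficients $\widehat{\Phi}(m)=-\frac{1}{2\pi i m}$ for $m\ne 0$ (and $\widehat{\Phi}(0)=0$), and that reindexing $m\mapsto -m$ gives $\Phi(x)\sim \sum_{m\ne 0}\frac{1}{2\pi i m}e(-mx)$ as a (conditionally convergent) symmetric Fourier series. The displayed identity is thus simply the definition of $g_M$ as the symmetric truncation error, and the real content is the pointwise bound on $g_M$. I would establish this by proving two separate estimates and taking the minimum. For the universal bound $|g_M(x)|\lesssim 1$, use $|\Phi|\le 1/2$ and pair terms $\pm m$ in the partial sum to rewrite it as $\sum_{m=1}^{M}\frac{\sin(2\pi mx)}{\pi m}$, which is uniformly bounded in $M$ and $x$ by the classical boundedness of partial sums of the sawtooth's Fourier series (via Abel summation against bounded Dirichlet sines). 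For the sharper bound, express $g_M(x)$ as the symmetric tail $\sum_{m>M}\frac{-\sin(2\pi mx)}{\pi m}$ and apply Abel summation using the Dirichlet-kernel estimate $\bigl|\sum_{M<m\le M'}e(-mx)\bigr|\lesssim 1/\|x\|$, which gives $|g_M(x)|\lesssim \frac{1}{M\|x\|}$.

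For the second identity, let $F(x)=\min\{1, 1/(M\|x\|)\}$, which is even, continuous, $1$-periodic and piecewise $C^\infty$, with $F(x)=1$ on $|x|\le 1/M$ and $F(x)=1/(M|x|)$ on $1/M\le |x|\le 1/2$. I would compute $b_m=\int_{-1/2}^{1/2}F(x)e(-mx)\,dx$ and verify the three bounds separately. The estimate $|b_m|\lesssim \log M/M$ follows by splitting the integral at $|x|=1/M$ and applying the triangle inequality directly. For $m\ne 0$, integrate by parts once: the boundary contributions at $x=\pm 1/2$ cancel because $F(\pm 1/2)=2/M$ and $e(\pm m/2)=(-1)^m$, leaving $b_m=\frac{1}{2\pi i m}\int F'(x)e(-mx)\,dx$ with $F'(x)=-\mathrm{sign}(x)/(Mx^2)$ on $1/M<|x|<1/2$ and zero elsewhere. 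Since $\int|F'|\lesssim 1$, this yields $|b_m|\lesssim 1/|m|$. A second integration by parts on each of the two smooth half-intervals picks up boundary contributions at $x=\pm 1/M$ of size $O(M)$ (coming from $1/(Mx^2)|_{x=1/M}=M$), together with a remaining integral whose integrand is $O(1/(Mx^3))$ and whose total mass is $O(M)$; after the outer factor $1/m$ this yields $|b_m|\lesssim M/m^2$. Taking the minimum of the three bounds gives the claim.

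The main obstacle is not depth but careful bookkeeping: tracking the jumps of $F'$ at the non-smooth points $x=\pm 1/M$ when integrating by parts twice, and verifying the cancellation of boundary contributions at $x=\pm 1/2$ coming from periodicity. Both parts of the lemma reduce to standard Fourier-analytic manipulations of explicit piecewise smooth one-variable functions, so no deeper input is required beyond keeping the constants straight.
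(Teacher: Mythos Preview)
Your proposal is correct and follows the standard route for this classical lemma. The paper itself does not give a proof: it simply cites Section~2 of Heath--Brown's Piatetski--Shapiro paper and page~260 of Mirek's weak-type~$(1,1)$ paper, where precisely the argument you outline (Abel summation for the tail of the sawtooth series, direct computation plus one and two integrations by parts for the Fourier coefficients of $\min\{1,1/(M\|x\|)\}$) is carried out. So there is no alternative approach to compare against; your sketch is exactly what the cited references contain, with the bookkeeping at the non-smooth points $x=\pm 1/M$ handled correctly.
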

\begin{proof}
See Section~2 in \cite{PHI}, or page 260 in \cite{WT11}.
\end{proof}
\begin{theorem}[Toeplitz theorem]\label{SBParts}
For every $N\in\mathbb{N}$ let $(c_{N,k})_{k\in[N]}$ be real numbers. Assume that the following conditions hold:
\begin{itemize}
\item[\normalfont{(i)}] For every $k\in\mathbb{N}$ we have that $\lim_{N\to\infty}c_{N,k}=0$,
\item[\normalfont{(ii)}] $\lim_{N\to\infty}\sum_{k=1}^Nc_{N,k}=1$,
\item[\normalfont{(iii)}] $\sup_{N\in\mathbb{N}}\sum_{k=1}^N|c_{N,k}|<\infty$.
\end{itemize}
Then for any sequence $(a_n)_{n\in\mathbb{N}}$ such that $\lim_{n\to\infty}a_n=a\in\mathbb{C}$, we have that $\lim_{N\to\infty}\sum_{k=1}^Nc_{N,k}a_k=a$.
\end{theorem}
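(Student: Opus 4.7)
The plan is to run the classical Silverman--Toeplitz three-$\varepsilon$ argument. First I would set $M \coloneqq \sup_N \sum_{k=1}^N |c_{N,k}|$, which is finite by hypothesis (iii), fix $\varepsilon > 0$, and use the convergence $a_n \to a$ to choose a threshold $K$ beyond which $|a_k - a| < \varepsilon/(3M)$. At this point all three given conditions are primed to be used.

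The key algebraic step is the decomposition
$$\sum_{k=1}^N c_{N,k} a_k - a \;=\; \sum_{k=1}^{K-1} c_{N,k}(a_k - a) \;+\; \sum_{k=K}^N c_{N,k}(a_k - a) \;+\; a\Big(\sum_{k=1}^N c_{N,k} - 1\Big),$$
after which each of the three pieces is killed by a different hypothesis. The middle piece is bounded in modulus by $M\cdot \varepsilon/(3M) = \varepsilon/3$ uniformly in $N$, using (iii) together with the choice of $K$. The last piece tends to $0$ as $N \to \infty$ by (ii) and the boundedness of $a$. The head is a sum of fixed length $K-1$ in which each fixed-$k$ coefficient $c_{N,k}$ vanishes as $N \to \infty$ by (i), so the head also tends to $0$; the factors $(a_k - a)$ remain absolutely bounded here since a convergent sequence is bounded. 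Taking $N$ sufficiently large makes the head and the last piece each smaller than $\varepsilon/3$, yielding $\bigl|\sum_{k=1}^N c_{N,k} a_k - a\bigr| < \varepsilon$, and letting $\varepsilon \downarrow 0$ concludes.

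I do not anticipate any real obstacle: Theorem~\ref{SBParts} is a textbook statement and its proof reduces to the decomposition above matched against the three hypotheses in one-to-one fashion. The only mild point to verify is that the argument applies to complex-valued sequences $(a_n)$, which is immediate because every estimate used only absolute values, and likewise no positivity was ever imposed on the weights $c_{N,k}$.
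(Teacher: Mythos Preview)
Your argument is correct and is exactly the classical Silverman--Toeplitz proof; the paper itself does not supply a proof of Theorem~\ref{SBParts} but simply cites it as standard with a reference to Hardy's \emph{Divergent Series}. One tiny remark: your division by $M$ is safe since condition~(ii) forces $M\ge 1$ (as $\bigl|\sum_{k}c_{N,k}\bigr|\le M$ and the left side tends to $1$), and the decomposition tacitly assumes $N\ge K$, which is harmless in the limit.
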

\begin{proof}This result is standard; see for example pages 42-48 in \cite{Toeplitz}.
\end{proof}
We define
\begin{multline}
B_N(f,g)(x)\coloneqq\frac{1}{|\mathbb{N}_h\cap [N]|}\sum_{n\in [N]}1_{\mathbb{N}_h}(n)f(T^nx)g(T^{-n}x)\text{,}
\\
A_N(f,g)\coloneqq\frac{1}{N}\sum_{n\in[N]}f(T^nx)g(T^{-n}x)\text{,}
\\
E_N(f,g)(x)\coloneqq\frac{1}{|\mathbb{N}_h\cap [N]|}\sum_{n\in [N]}\big(\Phi(-\varphi(n+1))-\Phi(-\varphi(n))\big)f(T^nx)g(T^{-n}x)\text{.}
\end{multline}
Bourgain's result together with the first lemma and Toeplitz theorem allows us to prove the following.
\begin{proposition}\label{basicstep1red}
Assume $c\in[1,2)$ and $h\in\mathcal{R}_c$. Let $(X,\mathcal{B},\mu)$ be a probability space and $T\colon X\to X$ an invertible $\mu$-invariant transformation. Then for every $f,g\in L_{\mu}^{\infty}(X)$ we have that
\begin{equation}\label{ptconvinter}
\limsup_{N\to\infty}\big|B_N(f,g)(x)-A_N(f,g)(x)\big|\le\limsup_{N\to\infty}|E_N(f,g)(x)|\quad\text{for $\mu$-a.e. $x\in X$.}
\end{equation}
\end{proposition}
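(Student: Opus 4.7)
The plan is to use Lemma~\ref{allowedtopass} to split $B_N$ into a smoothly weighted piece $M_N$ and the oscillatory remainder $E_N$, and then to show that $M_N - A_N \to 0$ $\mu$-a.e.\ by combining Bourgain's theorem \eqref{BourgAv} with the Toeplitz theorem~\ref{SBParts}.

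First, I will use the identity $\lfloor y\rfloor = y - \Phi(y) - 1/2$ together with Lemma~\ref{allowedtopass} to write
\[
1_{\mathbb{N}_h}(n) = \bigl(\varphi(n+1)-\varphi(n)\bigr) + \bigl(\Phi(-\varphi(n+1))-\Phi(-\varphi(n))\bigr)
\]
for all $n \geq C$. Since $f,g \in L^\infty_\mu$ and $|\mathbb{N}_h \cap [N]| \to \infty$, the contribution of $n < C$ to $B_N$ is $O(|\mathbb{N}_h \cap [N]|^{-1}) = o(1)$ uniformly in $x$. Setting
\[
M_N(f,g)(x) \coloneqq \frac{1}{|\mathbb{N}_h \cap [N]|}\sum_{n\in[N]}\bigl(\varphi(n+1)-\varphi(n)\bigr)f(T^nx)g(T^{-n}x),
\]
this yields $B_N(f,g)(x) = M_N(f,g)(x) + E_N(f,g)(x) + o(1)$, so \eqref{ptconvinter} reduces to proving that $M_N(f,g)(x) - A_N(f,g)(x) \to 0$ $\mu$-a.e.

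To do this, I will re-express $M_N(f,g)(x)$ via Abel summation as a convex combination of the averages $A_1(f,g)(x),\dotsc,A_N(f,g)(x)$. With $w_k \coloneqq \varphi(k+1)-\varphi(k)$ and $W_N \coloneqq \sum_{k=1}^N w_k = \varphi(N+1)-\varphi(1)$, this produces weights $c_{N,k} = k(w_k - w_{k+1})/W_N$ for $k<N$ and $c_{N,N} = Nw_N/W_N$, whose sum telescopes to one. Because $h'' > 0$ forces $\varphi$ to be concave, the sequence $(w_k)$ is nonincreasing, so all $c_{N,k}$ are nonnegative and condition (iii) of Theorem~\ref{SBParts} reduces to (ii). Condition (ii) is exactly the telescoping identity combined with the asymptotic $W_N \simeq |\mathbb{N}_h \cap [N]|$, which follows from the standard regularity estimates for functions in $\mathcal{R}_c$ (Lemmas~2.6 and 2.14 of \cite{MMR}). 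Condition (i) holds trivially for every fixed $k$ because $W_N \to \infty$.

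By Bourgain's theorem \eqref{BourgAv}, for $\mu$-a.e.\ $x \in X$ the sequence $(A_k(f,g)(x))_k$ converges to some limit $A_\infty(f,g)(x)$. For such $x$, Theorem~\ref{SBParts} applied to this sequence with weights $c_{N,k}$ gives $M_N(f,g)(x) \to A_\infty(f,g)(x)$, so $M_N(f,g)(x) - A_N(f,g)(x) \to 0$, and combining with the earlier decomposition yields \eqref{ptconvinter}. The most delicate bookkeeping step is the asymptotic $W_N \simeq |\mathbb{N}_h \cap [N]|$ together with uniform control of the boundary contribution for $n<C$; both are routine but depend essentially on the slow-variation properties encoded in the definition of $\mathcal{R}_c$.
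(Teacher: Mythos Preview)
Your proposal is correct and follows the same overall strategy as the paper: decompose $B_N = M_N + E_N + o(1)$ via Lemma~\ref{allowedtopass}, and then prove $M_N - A_N \to 0$ a.e.\ by combining Abel summation, Bourgain's theorem, and the Toeplitz theorem. The packaging of the Toeplitz step is, however, genuinely different and in fact cleaner than the paper's. The paper performs the same summation by parts but then treats the boundary term $\tfrac{N w_N}{|\mathbb{N}_h\cap[N]|}A_N$ and the inner sum separately, computing their limits as $\gamma L_x$ and $(1-\gamma)L_x$ (with $\gamma=1/c$); this requires explicit asymptotics for $\varphi',\varphi'',\varphi'''$ drawn from \cite{MMR} and forces a case split at $c=1$, where $1-\gamma=0$ and the inner Toeplitz array cannot be normalized. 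You instead keep the boundary term and the inner sum together as a single Toeplitz array $(c_{N,k})$, observe that all entries are nonnegative because $h''>0$ makes $\varphi$ concave, and note that the row sums telescope to $W_N/|\mathbb{N}_h\cap[N]|\to 1$. Theorem~\ref{SBParts} then applies in one stroke, uniformly in $c\in[1,2)$, without ever computing $\gamma$ and without the case distinction. The only external input you still need is the mild asymptotic $|\mathbb{N}_h\cap[N]|\sim\varphi(N)\sim W_N$, which is indeed routine from the slow-variation properties of $\mathcal{R}_c$.
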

\begin{proof}
We may assume without loss of generality that $f,g$ are $1$-bounded. By Lemma~$\ref{allowedtopass}$ we get
\begin{multline}
B_N(f,g)(x)= \frac{1}{|\mathbb{N}_h\cap [N]|}\sum_{n\in [N]}1_{\mathbb{N}_h}(n)f(T^nx)g(T^{-n}x)
\\
=\frac{1}{|\mathbb{N}_h\cap [N]|}\sum_{n\in [N]}\big(\varphi(n+1)-\varphi(n)\big)f(T^nx)g(T^{-n}x)
\\
+\frac{1}{|\mathbb{N}_h\cap [N]|}\sum_{n\in [N]}\big(\Phi(-\varphi(n+1))-\Phi(-\varphi(n))\big)f(T^nx)g(T^{-n}x)+O\big(\varphi(N)^{-1}\big)\text{,}
\end{multline}
where $\Phi$ is defined in $\ref{splittored}$. Let 
\[
M_N(f,g)(x)\coloneqq \frac{1}{|\mathbb{N}_h\cap [N]|}\sum_{n\in [N]}\big(\varphi(n+1)-\varphi(n)\big)f(T^nx)g(T^{-n}x)\text{,}
\]
and note that to obtain $\ref{ptconvinter}$ it suffices to prove that
\begin{equation}\label{1Goal}\lim_{N\to\infty}M_N(f,g)(x)=\lim_{N\to\infty}A_N(f,g)(x)\quad\text{for $\mu$-a.e. $x\in X$.}
\end{equation}
For $\mu$-a.e. $x\in X$ we know that $\lim_{N\to\infty}A_N(f,g)(x)=L_x$ exists by \cite{BDR}, so let us fix such an $x$. Summation by parts yields
\begin{multline}
|\mathbb{N}_h\cap[N]|M_N(f,g)(x)=\Big(\sum_{n=1}^Nf(T^nx)g(T^{-n}x)\Big)\big(\varphi(N+1)-\varphi(N)\big)
\\
-\sum_{n=1}^{N-1}\Big(\sum_{k=1}^nf(T^kx)g(T^{-k}x)\Big)\Big(\big(\varphi(n+2)-\varphi(n+1)\big)-\big(\varphi(n+1)-\varphi(n)\big)\Big)\text{,}
\end{multline}
and thus
\begin{multline}\label{maintermsbp}
M_N(f,g)(x)=\frac{N\big(\varphi(N+1)-\varphi(N)\big)}{|\mathbb{N}_h\cap[N]|}\Big(\frac{1}{N}\sum_{n=1}^Nf(T^nx)g(T^{-n}x)\Big)
\\
-\frac{1}{|\mathbb{N}_h\cap[N]|}\sum_{n=1}^{N-1}\Big(\frac{1}{n}\sum_{k=1}^nf(T^kx)g(T^{-k}x)\Big)n\Big(\big(\varphi(n+2)-\varphi(n+1)\big)-\big(\varphi(n+1)-\varphi(n)\big)\Big)\text{.}
\end{multline}
For the first summand note that
\begin{equation}\label{phiresults}
\lim_{N\to\infty}\frac{N\big(\varphi(N+1)-\varphi(N)\big)}{|\mathbb{N}_h\cap[N]|}=\lim_{N\to\infty}\frac{N\big(\varphi(N+1)-\varphi(N)\big)}{\varphi(N)}=1/c\text{,}
\end{equation}
To see this, let $\gamma=1/c$, and note that by Lemma~2.6 in \cite{MMR}, there exists a function $\theta$ such that $t\varphi'(t)=\varphi(t)(\gamma+\theta(t))$ and $\lim_{t\to\infty}\theta(t)=0$. By the Mean Value Theorem for every $N\in\mathbb{N}$ there exists $\xi_N\in(N,N+1)$ and $\zeta_N\in(N,\xi_N)$  such that
\[
\frac{N\big(\varphi(N+1)-\varphi(N)\big)}{\varphi(N)}=\frac{N\varphi'(\xi_N)}{\varphi(N)}=\frac{N\varphi'(N)}{\varphi(N)}+\frac{N(\varphi'(\xi_N)-\varphi'(N))}{\varphi(N)}=\gamma+\theta(N)+\frac{N(\xi_N-N)\varphi''(\zeta_N)}{\varphi(N)}\text{.}
\]
The form of $\varphi''$ in Lemma~2.14 in \cite{MMR} implies that $|x^2\varphi''(x)|\lesssim\varphi(x)$, which together with the fact that $\varphi(2x)\lesssim \varphi(x)$ yields 
\[
\bigg|\frac{N(\xi_N-N)\varphi''(\zeta_N)}{\varphi(N)}\bigg|\lesssim N^{-1}\text{,}
\]
justifying $\ref{phiresults}$, and establishing that the first summand of $\ref{maintermsbp}$ converges to $\gamma L_x$.

By letting $a_n=A_n(f,g)(x)$, the second summand becomes
\[
\frac{1}{|\mathbb{N}_h\cap[N]|}\sum_{n=1}^{N-1}a_nn\Big(\big(\varphi(n+1)-\varphi(n)\big)-\big(\varphi(n+2)-\varphi(n+1)\big)\Big)\text{.}
\] 
We wish to apply Theorem~$\ref{SBParts}$. We let
\[
\widetilde{c}_{N,n}=1_{[N-1]}(n)\frac{n\Big(\big(\varphi(n+1)-\varphi(n)\big)-\big(\varphi(n+2)-\varphi(n+1)\big)\Big)}{\lfloor\varphi(N)\rfloor}\text{,}
\]
and we firstly prove that 
\begin{equation}\label{unweightedlimit}
\lim_{N\to\infty}\sum_{n=1}^N \widetilde{c}_{N,n}=1-\gamma\text{.}
\end{equation}
Let $s(x)=\varphi(x+1)-\varphi(x)$, and by the Mean Valued Theorem there exists $\xi_n\in(n,n+1)$ and $\rho_n\in(\xi_n,\xi_n+1)\subseteq(n,n+2)$ such that
\[
s(n+1)-s(n)=s'(\xi_n)=\varphi'(\xi_n+1)-\varphi'(\xi_n)=\varphi''(\rho_n)
\]
and thus
\[
\sum_{n=1}^N\widetilde{c}_{N,n}=\frac{1}{\lfloor \varphi(N)\rfloor}\sum_{n=1}^{N-1}(-n\varphi''(\rho_n))\text{.}
\]
By the Mean Value Theorem there exists $\tau_n\in(n,n+2)$ such that
\begin{equation}\label{justanestimate}
\bigg|\sum_{n=1}^{N-1}(-n\varphi''(\rho_n))-\sum_{n=1}^{N-1}(-n\varphi''(n))\bigg|\lesssim\sum_{n=1}^N|n\varphi'''(\tau_n)|\lesssim\sum_{n=1}^Nn^{-1}\frac{\varphi(n)}{n}\lesssim \log N  \text{,}
\end{equation}
where for the last estimates we have used the fact that $|x^3\varphi'''(x)|\lesssim\varphi(x)$, as well as the fact that $\lim_{x\to\infty}x^{-1}\varphi(x)=0$ which follow from Lemma~2.6 and Lemma~2.14 in \cite{MMR}. Thus, provided that the limit in the right-hand side below exists, we have
\[
\lim_{N\to\infty}\sum_{n=1}^N\widetilde{c}_{N,n}=\lim_{N\to\infty}\frac{1}{\varphi(N)}\sum_{n=1}^N(-n\varphi''(n))\text{,}\quad\text{since }\lim_{N\to\infty}\frac{\log N}{\varphi(N)}=0\text{.}
\] 
To calculate this limit, we compare with its continuous counterpart, which we can compute as follows
\[
\frac{1}{\varphi(N)}\int_{1}^N(-x\varphi''(x))dx=\frac{-N\varphi'(N)}{\varphi(N)}+\frac{\varphi(N)}{\varphi(N)}+O\big(\varphi(N)^{-1}\big)\text{,}
\]
and using the same argument as in $\ref{phiresults}$, we obtain
\[
\lim_{N\to\infty}\frac{1}{\varphi(N)}\int_{1}^N(-x\varphi''(x))dx=-\gamma+1{.}
\]
We now prove that
\begin{equation}\label{finalgoalreduction}
\lim_{N\to\infty}\frac{1}{\varphi(N)}\bigg|\sum_{n=1}^N(-n\varphi''(n))-\int_{1}^N(-x\varphi''(x))dx\bigg|=0\text{.}
\end{equation}
To see this note that by the Mean Value Theorem
\begin{multline}
\Big|\sum_{n=1}^N(-n\varphi''(n))-\int_{1}^N(-x\varphi''(x))dx\Big|\le\sum_{n=2}^N\int_{n-1}^n|n\varphi''(n)-x\varphi''(x)|dx+O(1)
\\
\le\sum_{n=2}^N\int_{n-1}^n\big(|n(\varphi''(n)-\varphi''(x)|+|(n-x)\varphi''(x)|\big)dx+O(1)
\\
\lesssim\sum_{n=2}^N(n|\varphi'''(\xi_{x,n})|+n^{-2}\varphi(n))+O(1)\lesssim \sum_{n=2}^N n^{-2}\varphi(n)+O(1)\lesssim \log N\text{,}
\end{multline}
where the last estimate is established as in $\ref{justanestimate}$, and $\ref{finalgoalreduction}$ immediately follows. Thus $\ref{unweightedlimit}$ is established.
 
For $c>1$, we get that $\gamma<1$, and we may apply Theorem~$\ref{SBParts}$ for $c_{N,n}=\frac{\widetilde{c}_{N,n}}{1-\gamma}$. The first condition of Theorem~$\ref{SBParts}$ is clearly satisfied and the second condition implies the third since $c_{N,n}\ge 0$ for every $n\gtrsim 1$. In view of $\ref{unweightedlimit}$ it is clear that the second condition also holds and we may apply Theorem~$\ref{SBParts}$ to conclude that $\lim_{N\to\infty}\sum_{n=1}^Nc_{N,n}a_n=L_x$, and thus
 \begin{equation}\label{difred}
\lim_{N\to\infty} \frac{1}{|\mathbb{N}_h\cap[N]|}\sum_{n=1}^{N-1}a_nn\Big(\big(\varphi(n+1)-\varphi(n)\big)-\big(\varphi(n+2)-\varphi(n+1)\big)\Big)=(1-\gamma)L_x\text{.}
 \end{equation}
For $c=1$, we note that by taking into account the fact that $|a_n|=|A_n(f,g)(x)|\le 1$ the second summand may be estimated as follows
\begin{multline}
\frac{1}{|\mathbb{N}_h\cap[N]|}\Big|\sum_{n=1}^{N-1}a_nn\Big(\big(\varphi(n+1)-\varphi(n)\big)-\big(\varphi(n+2)-\varphi(n+1)\big)\Big)\Big|
\\
\lesssim \frac{1}{\lfloor \varphi(N)\rfloor}\sum_{n=1}^{N-1}n\big|\big(\varphi(n+1)-\varphi(n)\big)-\big(\varphi(n+2)-\varphi(n+1)\big)\big|\lesssim \sum_{n=1}^{N}\widetilde{c}_{n,N}+O(\varphi(N)^{-1})\text{,}
\end{multline}
since $\widetilde{c}_{N,n}\ge 0$ for every $n\gtrsim 1$. By $\ref{unweightedlimit}$ together with the estimate above we get 
\[
\lim_{N\to\infty} \frac{1}{|\mathbb{N}_h\cap[N]|}\sum_{n=1}^{N-1}a_nn\Big(\big(\varphi(n+1)-\varphi(n)\big)-\big(\varphi(n+2)-\varphi(n+1)\big)\Big)=0\text{,}
\]
and thus $\ref{difred}$ holds even for $c=1$.

We have calculated the limit of both summands in $\ref{maintermsbp}$, and thus
\[
\lim_{N\to\infty}M_N(f,g)(x)=\gamma L_x+(1-\gamma)L_x= \lim_{N\to\infty}A_N(f,g)(x)\quad\text{for $\mu$-a.e. $x\in X$,}
\] 
as desired. This concludes the proof of $\ref{1Goal}$, which, in turn, concludes the proof of Proposition~$\ref{basicstep1red}$.
\end{proof}
By taking into account Proposition~$\ref{basicstep1red}$, we note that Theorem~$\ref{MTHM}$ will easily follow once we prove that for $c\in[1,23/22)$ and for every $\lambda\in(1,2]$ we have
\begin{equation}\label{ErrorGoal}
\lim_{k\to\infty}E_{\lfloor \lambda^k\rfloor}(f,g)(x)=0\quad\text{for $\mu$-a.e. $x\in X$.}
\end{equation}
We decompose the error term using Lemma~$\ref{TrFourier}$ for a carefully chosen truncation at $M=M(N)$. For the sake of concreteness, we begin by fixing certain parameters:
\begin{equation}\label{choicesforsme}
\varepsilon_0\coloneqq \frac{23-22c}{40c}\text{,}\quad\sigma_0\coloneqq1-\frac{1}{c}+\varepsilon_0\text{,}\quad M\coloneqq \lfloor N^{\sigma_0}\rfloor\text{.}
\end{equation}
Note that $\varepsilon_0>0$, since $c<23/22$. We apply $\ref{splittored}$ for such an $M$ from the aforementioned lemma  to obtain
\begin{multline}
E_N(f,g)(x)=\frac{1}{|\mathbb{N}_h\cap [N]|}\sum_{n\in [N]}\big(\Phi(-\varphi(n+1))-\Phi(-\varphi(n))\big)f(T^nx)g(T^{-n}x)
\\
=\frac{1}{|\mathbb{N}_h\cap [N]|}\sum_{n\in [N]}\bigg(\sum_{0<|m|\le M}\frac{e(m\varphi(n+1))-e(m\varphi(n))}{2\pi i m}\bigg)f(T^nx)g(T^{-n}x)
\\
+\frac{1}{|\mathbb{N}_h\cap [N]|}\sum_{n\in [N]}\big(g_M(-\varphi(n+1))-g_M(-\varphi(n))\big)f(T^nx)g(T^{-n}x)\eqqcolon E^{(1)}_{N}(f,g)(x)+E^{(2)}_{N}(f,g)(x)\text{.}
\end{multline}
The proof of Theorem~$\ref{MTHM}$ will be easily derived once we prove the following.
\begin{proposition}\label{finalreductionE1andE2}Assume $c\in[1,23/22)$ and $h\in\mathcal{R}_c$. Let $(X,\mathcal{B},\mu)$ be a probability space and $T\colon X\to X$ an invertible $\mu$-invariant transformation. Let $f,g$ be 1-bounded functions on $X$. Then the following hold:
\begin{itemize}
\item[\normalfont{(i)}] For every $\lambda\in(1,2]$ we have that $\lim_{k\to\infty}E^{(1)}_{\lfloor \lambda^k\rfloor}(f,g)(x)=0\text{ for $\mu$-a.e. $x\in X$.}$
\item[\normalfont{(ii)}] $\lim_{N\to\infty}E^{(2)}_{N}(f,g)(x)=0\text{ for $\mu$-a.e. $x\in X$.}$
\end{itemize}
\end{proposition}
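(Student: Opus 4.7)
The plan is to prove (ii) via a direct pointwise bound and to establish (i) by the Gowers $U^3$ strategy outlined in subsection~\ref{strat}, combined with a Borel--Cantelli argument along the lacunary sequence $\lfloor \lambda^k \rfloor$.

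For (ii), since $f, g$ are $1$-bounded, it suffices to establish the deterministic estimate
\[
\frac{1}{|\mathbb{N}_h \cap [N]|} \sum_{n \in [N]} \Big( |g_M(-\varphi(n+1))| + |g_M(-\varphi(n))| \Big) = o(1).
\]
Using $|g_M(y)| \lesssim \min\{1, 1/(M\|y\|)\}$ and expanding this quantity via $\ref{errorgm}$, the $m=0$ contribution is of size $O\bigl(N \log M / M\bigr)$; after dividing by $|\mathbb{N}_h \cap [N]| \asymp \varphi(N) \asymp N^{1/c}$ and using $M = N^{\sigma_0}$ with $\sigma_0 = 1 - 1/c + \varepsilon_0$, this is $O(N^{-\varepsilon_0} \log N) \to 0$. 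The $m \neq 0$ contribution is controlled via Weyl/van der Corput bounds for the exponential sums $\sum_{n \le N} e(m \varphi(n))$, whose phases behave like fractional monomials by the regularity of $\varphi$ recorded in Lemma~2.14 of \cite{MMR}; aggregating these bounds against $|b_m| \lesssim \min\{\log M/M,\, 1/|m|,\, M/|m|^2\}$ and dividing by $|\mathbb{N}_h \cap [N]|$ yields another negative power of $N$.

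For (i), I would first apply Calder\'on's transference principle to pass to the integer shift system, reducing matters to proving a quantitative $\ell^2$-bound of the form
\[
\Big\| \frac{1}{|\mathbb{N}_h \cap [N]|} \sum_{n \in [N]} \psi_N(n)\, f(\cdot + n)\, g(\cdot - n) \Big\|_{\ell^2(\mathbb{Z})} \lesssim N^{1/2 - \delta}
\]
for $1$-bounded functions supported in an interval of length $O(N)$, where $\psi_N(n) \coloneqq \sum_{0 < |m| \le M} \frac{e(m\varphi(n+1)) - e(m\varphi(n))}{2\pi i m}$ and $\delta > 0$. Polynomial $N$-decay of this form, summed along $N = \lfloor \lambda^k \rfloor$, yields $\sum_k \|E^{(1)}_{\lfloor \lambda^k \rfloor}(f,g)\|_{L^2_\mu}^2 < \infty$, hence a.e.\ convergence to $0$ along the lacunary subsequence by the Borel--Cantelli lemma.

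The key technical step, to be carried out in Sections~\ref{difSection} and \ref{finSection}, is to show that the bilinear form above is dominated by a fractional power of the Gowers $U^3$-norm of $\psi_N$. This control is obtained by iterated Cauchy--Schwarz estimates (in the spirit of the Krause--Mirek--Tao framework), after which one writes $\|\psi_N\|_{U^3}^{2^3}$ as an average over $h_1$ of $\|\Delta_{h_1}\psi_N\|_{U^2}^{2^2}$ and invokes the $U^2$-inverse theorem to reduce to uniform Fourier estimates for $\Delta_{h_1}\psi_N$. This in turn becomes an exponential-sum problem with phases of the form $m_1[\varphi(n+h_1+1) - \varphi(n+h_1)] - m_2[\varphi(n+1) - \varphi(n)] + \xi n$, which are tractable via van der Corput's $k$-th derivative test combined with the differentiability bounds on $\varphi$ from Lemma~2.14 of \cite{MMR}. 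The main obstacle will be extracting enough decay from $\|\psi_N\|_{U^3}$ to absorb the $O(M)$ aggregate loss from the summation $0 < |m| \le M$; balancing the resulting bookkeeping against the choice $M = N^{\sigma_0}$ in $\ref{choicesforsme}$ is precisely what forces the restriction $c < 23/22$.
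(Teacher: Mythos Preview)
Your overall strategy for both parts matches the paper's: (ii) via the pointwise bound on $g_M$, the Fourier expansion \eqref{errorgm}, and van der Corput estimates for $\sum_n e(m\varphi(n))$; and (i) via Calder\'on transference, $U^3$-control of the bilinear form, the $U^2$ inverse theorem, exponential-sum estimates, and a Borel--Cantelli argument along the lacunary scale.

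There is, however, a concrete slip in your identification of the exponential-sum phases in (i). After applying $\Delta_{h_1}$ and the $U^2$ inverse theorem one must estimate $\sum_n (\Delta_{h_1}\psi_N)(n)e(n\xi)$. Writing $e(m\varphi(n+1))-e(m\varphi(n))=e(m\varphi(n))\psi_m(n)$ with $\psi_m(n)=e\big(m(\varphi(n+1)-\varphi(n))\big)-1$, the oscillation in $\Delta_{h_1}\psi_N$ is carried by the phase $m_1\varphi(n)-m_2\varphi(n+h_1)+n\xi$, while $\psi_{m_1}(n)\overline{\psi_{m_2}(n+h_1)}$ enters only as a slowly varying amplitude of size $\lesssim |m_1m_2|\,\varphi'(n)^2$. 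The increments $m[\varphi(n+1)-\varphi(n)]$ you wrote are the arguments of these amplitude factors, not the phases that drive van der Corput decay; treating them as phases would yield a second derivative of order $m\varphi'''(n)$, which is far too small to produce any gain. The paper instead applies Corollary~3.12 of \cite{WT11} to the phase $m_1\varphi(n)-m_2\varphi(n+h_1)+n\xi$.

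Two further technical ingredients your sketch omits but the argument genuinely needs: first, a dyadic decomposition $K_N=\sum_l K_{N,l}$ in the $n$-variable, since the van der Corput input is stated on dyadic blocks; second, and more importantly, the exponential-sum bound for $\Delta_{h_1}K_{N,l}$ is only available when $|h_1|\ge \varphi(2^l)^\kappa$ (this is what keeps the second derivative of $m_1\varphi(n)-m_2\varphi(n+h_1)$ bounded away from zero), so the small-$|h_1|$ range must be handled separately by a trivial bound. Balancing these two contributions against the choice of $\kappa$ and $M=N^{\sigma_0}$ is exactly what produces the numerology forcing $c<23/22$.
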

The second assertion above is substantially easier than the first and we immediately establish it below.
\begin{proof}[Proof of Proposition~$\ref{finalreductionE1andE2}$\normalfont{(ii)}.]
For every pair of $1$-bounded functions $f$, $g$, and for $\mu$-a.e. $x\in X$, we have
\begin{multline}\label{E2quick}
|E_N^{(2)}(f,g)(x)|=\bigg|\frac{1}{|\mathbb{N}_h\cap [N]|}\sum_{n\in [N]}\big(g_M(-\varphi(n+1))-g_M(-\varphi(n))\big)f(T^nx)g(T^{-n}x)\bigg|
\\
\lesssim \frac{1}{\lfloor \varphi(N)\rfloor}\sum_{n\in [N]}\big(|g_M(-\varphi(n+1))|+|g_M(-\varphi(n))|\big)
\\
\lesssim \frac{1}{\varphi(N)}\sum_{n\in[N]}\bigg(\min\bigg\{1,\frac{1}{M\|\varphi(n+1)\|}\bigg\}+\min\bigg\{1,\frac{1}{M\|\varphi(n)\|}\bigg\}\bigg)\text{,}
\end{multline}
where we have used Lemma~$\ref{TrFourier}$. We may use the following lemma to conclude. Let us mention that if $c=1$, then we fix $\sigma$ as in Lemma~2.14 in \cite{MMR}, otherwise we let $\sigma$ be the constant function $1$.
\begin{lemma}\label{Errorinliterature}
Let $c\in[1,4/3)$ and $h\in\mathcal{R}_c$ with $\varphi$ its compositional inverse. Then there exists a positive constant $C=C(h)$ such that for all $N,M\in\mathbb{N}_{\ge 2}$ and $q\in\{0,1\}$ we have
\[
\sum_{n\in[N]}\min\bigg\{1,\frac{1}{M\|\varphi(n+q)\|}\bigg\}\lesssim  \frac{N\log M}{M}+\frac{NM^{1/2}\log N}{(\varphi(N)\sigma(N))^{1/2}}\text{.}
\]
\end{lemma}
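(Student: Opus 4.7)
The plan is to expand the minimum into its Fourier series via \eqref{errorgm} of Lemma~\ref{TrFourier} and then estimate the resulting exponential sums by a van der Corput second-derivative bound, using the explicit asymptotics for $\varphi''$ provided by Lemma~2.14 in \cite{MMR}.

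Writing
\[
\min\bigg\{1,\frac{1}{M\|\varphi(n+q)\|}\bigg\}=\sum_{m\in\mathbb{Z}}b_m\,e(m\varphi(n+q))\text{,}
\]
with $|b_m|\lesssim\min\{(\log M)/M,\,1/|m|,\,M/|m|^2\}$, and swapping the order of summation, the $m=0$ contribution is exactly $N|b_0|\lesssim N\log M/M$, which produces the first summand of the target bound. It then remains to control the oscillatory piece $\sum_{m\ne 0}b_m\sum_{n\in[N]}e(m\varphi(n+q))$.

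For the inner exponential sum I would apply van der Corput's second-derivative test after a dyadic decomposition of $[1,N]$. On each dyadic block $[K,2K]$ the estimate $|\varphi''(x)|\asymp\varphi(x)/\big(x^2\sigma(x)\big)$ (with $\sigma\equiv 1$ when $c>1$), inherited from Lemma~2.14 in \cite{MMR}, yields
\[
\Big|\sum_{n\in[K,2K]}e(m\varphi(n+q))\Big|\lesssim K\lambda_K^{1/2}+\lambda_K^{-1/2}\text{,}\qquad \lambda_K\coloneqq|m|\varphi(K)/\big(K^2\sigma(K)\big)\text{,}
\]
supplemented by a first-derivative (Kusmin--Landau) bound in the regime where $\lambda_K$ is too small to make the second-derivative test efficient. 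Summing this estimate over the $O(\log N)$ dyadic blocks, and then against $|b_m|$ after splitting the $m$-range at $|m|\asymp M$ (using $|b_m|\le 1/|m|$ below $M$ and $|b_m|\le M/|m|^2$ above), elementary geometric summation produces the two model contributions $(M\varphi(N)/\sigma(N))^{1/2}$ and $N(\sigma(N)/\varphi(N))^{1/2}$, both of which are absorbed by $NM^{1/2}\log N/(\varphi(N)\sigma(N))^{1/2}$ since $\varphi(N)\le N$; the logarithmic factor $\log N$ accommodates the dyadic loss together with the small-$m$ regime.

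The main obstacle is the uniform verification of the van der Corput hypotheses across the full range of $m$ and $K$: one must check that $\lambda_K$ remains in the regime where the second-derivative test is effective, transitioning to Kusmin--Landau otherwise, and that the monotonicity and higher-derivative control inherited from $h\in\mathcal{R}_c$ persist across the dyadic blocks. This verification is precisely what forces the restriction $c<4/3$ (rather than merely $c<2$), since beyond that range the small-$|\varphi''|$ regime becomes too large and the second-derivative test ceases to beat the trivial bound. The detailed bookkeeping is carried out explicitly in \cite{WT11} and \cite{MMR}, as the name of the lemma is meant to reflect.
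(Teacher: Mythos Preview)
Your approach is essentially the same as the paper's: expand the minimum via \eqref{errorgm}, isolate the $m=0$ term to produce $N\log M/M$, bound the exponential sums $\sum_{n}e(m\varphi(n+q))$ by van der Corput after a dyadic decomposition in $n$, and then split the $m$-sum at $M$ using $|b_m|\le 1/|m|$ and $|b_m|\le M/|m|^2$. The only difference is packaging: the paper quotes the block estimate $|\sum_{P\le n\le P'}e(m\varphi(n+q))|\lesssim |m|^{1/2}P(\varphi(P)\sigma(P))^{-1/2}$ directly from Lemma~3.6 of \cite{WT11} (already a van der Corput consequence) rather than writing out $K\lambda_K^{1/2}+\lambda_K^{-1/2}$, and it does not invoke any Kusmin--Landau transition. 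One remark: the restriction $c<4/3$ is not forced by the van der Corput step in the way you describe --- the second-derivative test applies uniformly and the resulting bound is absorbed by the target for all $c<2$; the paper simply carries the hypothesis from the cited lemma and nowhere uses it explicitly in this proof.
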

\begin{proof}
By $\ref{errorgm}$ we may estimate as follows
\begin{multline}
\sum_{n\in[N]}\min\bigg\{1,\frac{1}{M\|\varphi(n+q)\|}\bigg\}=\sum_{n\in[N]}\sum_{m\in\mathbb{Z}}b_me(m\varphi(n+q))\le\sum_{m\in\mathbb{Z}}|b_m|\Big|\sum_{n\in[N]}e(m\varphi(n+q))\Big|
\\
\lesssim |b_0|N+\sum_{1\le|m|\le M}|b_m|\Big|\sum_{n\in[N]}e(m\varphi(n+q))\Big|+\sum_{|m|\ge M+1}|b_m|\Big|\sum_{n\in[N]}e(m\varphi(n+q))\Big|\text{.}
\end{multline}
By Lemma~3.6 in \cite{WT11}, which is a straightforward application of van der Corput lemma (see Corollary~8.13, page 208 in \cite{IWKO}), we get that for every $1\le P\le P'\le 2P$ we have
\[
\Big|\sum_{P\le n\le P'}e(m\varphi(n+q))\Big|\lesssim |m|^{1/2}P\big(\varphi(P)\sigma(P)\big)^{-1/2}\text{,}
\]
which is eventually increasing in $P$, and thus
\[
\Big|\sum_{1\le n\le N}e(m\varphi(n+q))\Big|\lesssim \log(N)|m|^{1/2}N\big(\varphi(N)\sigma(N)\big)^{-1/2}\text{.}
\]
Therefore, by also taking into account the estimate $|b_m|\lesssim \min\Big\{\frac{\log M}{M},\frac{1}{|m|},\frac{M}{|m|^2}\Big\}$, we get
\begin{multline}
\sum_{n\in[N]}\min\bigg\{1,\frac{1}{M\|\varphi(n+q)\|}\bigg\}
\\
\lesssim \frac{N\log M}{M}+\sum_{1\le|m|\le M}\frac{1}{|m|}\Big(\log(N)|m|^{1/2}N\big(\varphi(N)\sigma(N)\big)^{-1/2}\Big)
\\
+\sum_{|m|\ge M+1}\frac{M}{|m|^2}\Big(\log(N)|m|^{1/2}N\big(\varphi(N)\sigma(N)\big)^{-1/2}\Big)
\\
\lesssim NM^{-1}\log (M) +\log(N)N\big(\varphi(N)\sigma(N)\big)^{-1/2}M^{1/2}
\text{,}
\end{multline}
and the proof is complete.
\end{proof}
Returning to the last line of $\ref{E2quick}$, we see that an application of our lemma yields
\begin{equation}\label{finalboundforE2}
|E_N^{(2)}(f,g)(x)|\lesssim \frac{\log(M)N}{M\varphi(N)}+\frac{\log(N)NM^{1/2}}{\varphi(N)^{3/2}\sigma(N)^{1/2}}\text{.}
\end{equation}
By taking into account the choice of $M$, see $\ref{choicesforsme}$, one can immediately check that the right-hand side converges to $0$ as $N\to\infty$, concluding the proof of Proposition~$\ref{finalreductionE1andE2}$\normalfont{(ii)}. Nevertheless, let us briefly elaborate here.

For the first summand, assuming that $M=\lfloor N^{\sigma_0}\rfloor$, note that it suffices to have that
\[
1-\sigma_0-\frac{1}{c}<0\iff \sigma_0>1-\frac{1}{c}\text{,}
\]
and for the second summand, it suffices to have
\[
1+\frac{\sigma_0}{2}-\frac{3}{2c}<0\iff \sigma_0<\frac{3}{c}-2\text{,}
\]
where for the case $c=1$ we took into account that $\sigma(x)^{-1}\lesssim_{\delta}x^{\delta}$ for all $\delta>0$, see Lemma~2.14 in \cite{MMR}. Any choice $\sigma_0\in\big(1-\frac{1}{c},\frac{3}{c}-2\big)$ is admissible for the present argument to work, and the concrete choice
\[
\sigma_0\coloneqq1-\frac{1}{c}+\frac{23-22c}{40c}\text{,}\quad\text{see $\ref{choicesforsme}$,}
\]
works here since $\sigma_0>1-1/c$ and $\sigma_0<3/c-2$, because 
\[
\frac{23-22c}{40c}>0\quad\text{and}\quad
1-\frac{1}{c}+\frac{23-22c}{40c}<\frac{3}{c}-2\iff c<\frac{137}{98}\text{.}
\] 
The proof is complete.
\end{proof}
It remains to establish Proposition~$\ref{finalreductionE1andE2}$(i), and here a more delicate approach is needed. In the following section we collect some useful intermediate results, and in the final section we provide its proof. Finally, we show that Proposition~$\ref{finalreductionE1andE2}$ together with Proposition~$\ref{basicstep1red}$ immediately yield Theorem~$\ref{MTHM}$.  
\section{Gowers norm Bounds}\label{difSection}
Keeping in mind the choices in $\ref{choicesforsme}$, let us define
\[
 K_N(n)\coloneqq\frac{1_{[N]}(n)}{|\mathbb{N}_h\cap[N]|}\sum_{0<|m|\le M}\frac{e(m\varphi(n+1))-e(m\varphi(n))}{2\pi i m}=\frac{1_{[N]}(n)}{\lfloor \varphi(N)\rfloor}\sum_{0<|m|\le M}\frac{e(m\varphi(n))\psi_m(n)}{2\pi i m}\text{,}
\]
where $\psi_m(n)=e\big(m(\varphi(n+1)-\varphi(n))\big)-1$, so that
\[
E^{(1)}_N(f,g)(x)= \sum_{n\in\mathbb{Z}}K_N(n)f(T^nx)g(T^{-n}x)\text{.}
\]
Proposition~$\ref{finalreductionE1andE2}$(i) will be derived by exploiting appropriate bounds for $\|E^{(1)}_N(f,g)\|_{L_{\mu}^1(X)}$, see Section~$\ref{finSection}$. A key ingredient for establishing these $L^1$-bounds is the following proposition, the proof of which is the content of this section.
\begin{proposition}\label{Sectionskey}Assume $c\in[1,23/22)$ and $h\in\mathcal{R}_c$. Let $S\ge 1$ and $f_0,f_1,f_2\colon\mathbb{Z}\to\mathbb{C}$ be $1$-bounded functions with supports contained in $[-SN,SN]$. Then there exist positive constants $C=C(h,S)$ and $\chi=\chi(h)$ such that for all $N\in\mathbb{N}$ we have
\begin{equation}\label{boundoftheform}
\Big|\sum_{m\in\mathbb{Z}}\sum_{n\in\mathbb{Z}}f_0(m)f_1(m-n)f_2(m+n)K_N(n)\Big|\le CN^{1-\chi} \text{.}
\end{equation}
\end{proposition}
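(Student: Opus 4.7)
We follow the strategy outlined in subsection~$\ref{strat}$. After the substitution $x=m-n$, the left-hand side of $\ref{boundoftheform}$ becomes
\[
\sum_{x,n\in\mathbb{Z}}f_1(x)\,f_0(x+n)\,f_2(x+2n)\,K_N(n)\text{,}
\]
a weighted three-term arithmetic progression count in which $K_N$ is a weight on the common difference $n$. The plan is to combine three ingredients: a generalized von Neumann-type inequality that controls this trilinear form by a Gowers-type norm of $K_N$, a reduction from the $U^3$-norm of $K_N$ to averaged $U^2$-quantities, and van der Corput-type exponential sum bounds applied to phases built out of $\varphi$.

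As a first step, three applications of the Cauchy--Schwarz inequality in the $n$-variable (after introducing auxiliary shift parameters in the manner of the standard proof of Gowers' generalized von Neumann theorem), combined with the $1$-boundedness and $[-SN,SN]$-support of the $f_i$'s, should yield an estimate of the shape
\[
\Big|\sum_{m,n}f_0(m)\,f_1(m-n)\,f_2(m+n)\,K_N(n)\Big|^{2^{k}}\lesssim_{S} N^{A}\,\|K_N\|_{U^3}^{B}
\]
for some absolute constants $k,A,B>0$. The problem is then reduced to establishing $\|K_N\|_{U^3}\lesssim N^{1-\eta}$ for some $\eta=\eta(h)>0$. This is nontrivial even though the crude bound $\|K_N\|_\infty\lesssim M/N=N^{\sigma_0-1}$ is immediate from $|\psi_m(n)|\lesssim|m|\varphi'(n)$ and the Mean Value Theorem; one must exploit oscillation to gain a power saving.

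For the $U^3$-estimate we rely on the identity
\[
\|K_N\|_{U^3}^{8}=\sum_{h_3\in\mathbb{Z}}\|\Delta_{h_3}K_N\|_{U^2}^{4}
\]
and on the Fourier characterization
\[
\|\Delta_{h_3}K_N\|_{U^2}^{4}\le\|\Delta_{h_3}K_N\|_{\ell^2}^{2}\,\sup_{\xi\in\mathbb{T}}\bigl|\widehat{\Delta_{h_3}K_N}(\xi)\bigr|^{2}\text{,}
\]
so that the matter reduces to a uniform-in-$\xi$ estimate for $\widehat{\Delta_{h_3}K_N}(\xi)$. Expanding the definition of $K_N$ and bounding the benign factors $\psi_{m_i}$ trivially by $2$, one is led to a double sum over $1\le|m_1|,|m_2|\le M$ with weights $(m_1m_2)^{-1}$ of exponential sums of the form
\[
\sum_{x\in\mathbb{Z}}e\bigl(m_1\varphi(x)-m_2\varphi(x+h_3)-\xi x\bigr)\,1_{[N]}(x)\,1_{[N]}(x+h_3)\text{.}
\]
The second derivative of the phase is of order $(m_1-m_2)\varphi''(x)+m_2 h_3\varphi'''(x)$, whose size is controlled via the asymptotics of $\varphi''$ and $\varphi'''$ supplied by Lemma~2.14 in \cite{MMR}, and van der Corput's second-derivative test (in the form used in \cite{WT11}) then produces the required cancellation.

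The main obstacle, where the bulk of the technical work will lie, is the bookkeeping that remains: we have to split the $(m_1,m_2,h_3)$-sum into the near-diagonal regime $m_1=m_2$ (in which the saving comes from the $h_3\varphi'''$-contribution together with an additional averaging in $h_3$) and the off-diagonal regime $m_1\ne m_2$ (in which the saving comes from the $(m_1-m_2)\varphi''$-contribution), and then balance the resulting exponential sum savings against the losses coming from the Cauchy--Schwarz steps, the $(m_1m_2)^{-1}$ weight, the summation over $|h_3|\lesssim N$, and the size $M=\lfloor N^{\sigma_0}\rfloor$ of the Fourier truncation. The concrete choice of parameters $\sigma_0$ and $\varepsilon_0$ in $\ref{choicesforsme}$, together with the hypothesis $c<23/22$, is calibrated precisely so that after this optimization a net power saving $N^{-\chi}$ with $\chi=\chi(h)>0$ survives.
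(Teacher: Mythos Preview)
Your overall plan matches the paper's: reduce via a generalized von Neumann inequality (the paper's Lemma~$\ref{U3control}$) to a $U^3$-type quantity for the kernel, rewrite this as an $h_3$-average of $\|\Delta_{h_3}K_N\|_{U^2}^4$, apply the $U^2$ inverse theorem, and estimate the resulting exponential sums by van der Corput methods. Two points of execution differ from the paper, and the second is a genuine obstacle as you have written it.

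First, the paper decomposes $K_N=\sum_{l}K_{N,l}$ into dyadic pieces $1_{[2^l,2^{l+1})}K_N$ \emph{before} applying Lemma~$\ref{U3control}$; this is needed because the van der Corput hypotheses require the relevant derivatives of $\varphi$ to be of essentially constant order over the summation range, which fails on all of $[1,N]$. Second, and more substantively, your diagonal/off-diagonal splitting in $(m_1,m_2)$ combined with the second-derivative test does not work as stated: the second derivative of the phase is $m_1\varphi''(x)-m_2\varphi''(x+h_3)$, and for $m_1,m_2$ of the same sign with $m_1\ne m_2$ this vanishes when $h_3$ is of order $x(m_1-m_2)/m_2$, so there is no uniform lower bound in your ``off-diagonal'' regime. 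The paper instead splits on the size of $|h_3|$: for $|h_3|<\varphi(2^l)^{\kappa}$ it uses the trivial bound (there are only $O(\varphi(N)^{\kappa})$ such $h_3$), while for $|h_3|\ge\varphi(2^l)^{\kappa}$ it invokes Corollary~3.12 of \cite{WT11}, an estimate tailored to sums $\sum_n e(m_1\varphi(n)-m_2\varphi(n+h_3)+n\xi)\psi_{m_1}(n)\overline{\psi_{m_2}(n+h_3)}$ on a dyadic block that delivers cancellation \emph{uniformly in $(m_1,m_2)$} once $|h_3|$ exceeds this threshold; the parameter $\kappa=(9c-6)/5$ and the range $c<23/22$ emerge from balancing these two $h_3$-regimes, not a diagonal/off-diagonal dichotomy in $m$. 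A smaller slip: you cannot ``bound the factors $\psi_{m_i}$ trivially by $2$'' and then pull them outside the $x$-sum, since they depend on $x$; one must either expand $\psi_m(n)=e\big(m(\varphi(n+1)-\varphi(n))\big)-1$ into pure exponentials or apply partial summation, and the latter is what produces the amplitude factor $|m_1m_2|\varphi(2^l)^{2}2^{-2l}$ in the paper's bound.
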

The proof relies on two lemmata. The first one is a simple instance of the fact that difference functions control linear configurations.
\begin{lemma}\label{U3control}Let $S\ge 1$ and $f_0,f_1,f_2,f_3\colon\mathbb{Z}\to\mathbb{C}$ be $1$-bounded functions with supports contained in $[-SN,SN]$. Then there exists a constant $C=C(S)$ such that for all $N\in\mathbb{N}$ we have
\begin{equation}\label{boundU3type}
\Big|\sum_{x\in\mathbb{Z}}\sum_{n\in\mathbb{Z}}f_0(x)f_1(x-n)f_2(x+n)f_3(n)\Big|^8\le C N^{13} 
\sum_{h_3\in\mathbb{Z}}\mu_{N}(h_3)\sum_{h_1,h_2\in[-N,N]}\sum_{n\in\mathbb{Z}}\Delta_{h_1,h_2,h_3}f_3(n)\text{.}
\end{equation}
\end{lemma}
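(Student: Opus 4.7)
The plan is to peel off $f_0, f_1, f_2$ by three applications of Cauchy--Schwarz, each of which introduces one difference variable $h_i$ acting on $f_3$. Since the target bound carries the weight $\mu_N(h_3)$, exactly one of the three peeling steps must be implemented as a van der Corput inequality of window length $N$ rather than a plain Cauchy--Schwarz; the natural choice is the very first one, carried out in the variable $n$. Write $T = \sum_x f_0(x)G(x)$ with $G(x) := \sum_n f_1(x-n)f_2(x+n)f_3(n)$; then Cauchy--Schwarz in $x$ eliminates $f_0$ and gives $|T|^2 \lesssim_S N\sum_x|G(x)|^2$. Applying van der Corput with window length $N$ to the sum $G(x) = \sum_n a_n(x)$, supported in $n\in[-SN,SN]$, yields
\[
|G(x)|^2 \lesssim_S N\sum_{h_3}\mu_N(h_3)\sum_n a_n(x)\overline{a_{n+h_3}(x)}.
\]
Summing in $x$ and substituting $u=x-n$ (so that $x+n=u+2n$) produces
\[
|T|^2 \lesssim_S N^2\sum_{h_3}\mu_N(h_3)U(h_3),\quad U(h_3) := \sum_{u,n}F_{h_3}(u)G_{h_3}(u+2n)\Delta_{h_3}f_3(n),
\]
where $F_{h_3}(u) := f_1(u)\overline{f_1(u-h_3)}$ and $G_{h_3}(v) := \Delta_{h_3}f_2(v)$ are $1$-bounded and supported on intervals of length $O_S(N)$.

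Squaring and applying Cauchy--Schwarz in $h_3$ against the probability measure $\mu_N$ yields $|T|^4 \lesssim_S N^4\sum_{h_3}\mu_N(h_3)|U(h_3)|^2$. Inside $|U(h_3)|^2$, Cauchy--Schwarz in $u$ removes $F_{h_3}$ at the cost of a factor $\lesssim_S N$; expanding the square and setting $h_2=n'-n$ brings out a second difference on $f_3$:
\[
|U(h_3)|^2 \lesssim_S N\sum_{h_2}D(h_2,h_3)\sum_n\Delta_{h_2,h_3}f_3(n),\quad D(h_2,h_3) := \sum_v G_{h_3}(v)\overline{G_{h_3}(v+2h_2)},
\]
with $|D(h_2,h_3)| \lesssim_S N$. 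Squaring once more, applying Cauchy--Schwarz in $h_3$ against $\mu_N$ a second time, and then applying Cauchy--Schwarz in $h_2$ (range $\lesssim_S N$, using $\sum_{h_2}|D(h_2,h_3)|^2 \lesssim_S N^3$) reduces the problem to bounding $\sum_{h_2}|\sum_n\Delta_{h_2,h_3}f_3(n)|^2$. Expanding this last square via $h_1 = n_2 - n_1$ produces the third difference operator on $f_3$, and combining all accumulated factors of $N$ (namely $N^{10}$ from squaring the $|T|^2$ bound twice, together with $N^3$ from the last Cauchy--Schwarz in $h_2$) gives the claimed bound $|T|^8 \lesssim_S N^{13}\sum_{h_3}\mu_N(h_3)\sum_{h_1,h_2}\sum_n\Delta_{h_1,h_2,h_3}f_3(n)$.

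The main obstacle is pinpointing the unique step at which van der Corput (rather than plain Cauchy--Schwarz) must be deployed so that the $\mu_N(h_3)$ weight appears with the correct normalization, and tracking the cumulative $N$-factors across the eightfold squaring so that they sum to precisely $N^{13}$. A secondary convenience is the change of variable $u = x-n$, which converts the three-point linear pattern into the bilinear pairing $F_{h_3}(u)G_{h_3}(u+2n)$ and makes the subsequent Cauchy--Schwarz in $u$ straightforward. The ranges of $h_1, h_2$ produced by the argument are $[-C_S N, C_S N]$ rather than $[-N, N]$, but this slight excess is absorbed into the constant $C = C(S)$.
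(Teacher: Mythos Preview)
Your overall strategy is sound and close to the paper's, but the final sentence contains an unjustified step. Your argument produces the bound
\[
|T|^8 \lesssim_S N^{13} \sum_{h_3}\mu_N(h_3)\sum_{h_1,h_2\in\mathbb{Z}}\sum_n \Delta_{h_1,h_2,h_3}f_3(n),
\]
with the $(h_1,h_2)$-sum effectively over $[-2SN,2SN]^2$ by support considerations. The lemma, however, asserts the bound with $h_1,h_2\in[-N,N]$, and your claim that ``the excess is absorbed into the constant'' is not valid: for fixed $h_3$ the quantity $\sum_n\Delta_{h_1,h_2,h_3}f_3(n)$ has no sign, so the sum over a larger box in $(h_1,h_2)$ neither dominates nor is dominated by the sum over $[-N,N]^2$. (In the paper's application $f_3=K_{N,l}$ is supported in an interval of length $\le N$, so the two ranges happen to coincide---but that is a feature of the application, not of the lemma as stated.)

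The source of the discrepancy is your belief that ``exactly one of the three peeling steps must be implemented as a van der Corput inequality.'' In fact the paper applies van der Corput with window $N$ at \emph{all three} differencing steps; this is precisely what forces each of $h_1,h_2,h_3$ into $[-N,N]$. The resulting extra weights $\mu_N(h_1),\mu_N(h_2)$ are then removed during the squaring/Cauchy--Schwarz stages via $\sum_h \mu_N(h)^2\lesssim N^{-1}$, so that only the $\mu_N(h_3)$ weight---attached to the \emph{last} difference introduced, not the first as in your ordering---survives. To repair your argument, replace the two ``expand the square'' steps that produce $h_2$ and $h_1$ by van der Corput with window $N$; the $N$-bookkeeping still totals $N^{13}$.
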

\begin{proof}
The proof is standard and relies on repeated applications of Cauchy-Schwarz, together with van der Corput inequality, see Section~3 in \cite{PelPre}. For the sake of completeness we provide some details. We have
\begin{multline}
\Big|\sum_{x\in\mathbb{Z}}\sum_{n\in\mathbb{Z}}f_0(x)f_1(x-n)f_2(x+n)f_3(n)\Big|^2
\\
\le\Big(\sum_{x\in\mathbb{Z}}|f_0(x)|^2\Big)\Big(\sum_{x\in\mathbb{Z}}\Big|\sum_{n\in\mathbb{Z}}f_1(x-n)f_2(x+n)f_3(n)\Big|^2\Big)
\\
\lesssim N\sum_{x\in\mathbb{Z}}N\sum_{h_1\in\mathbb{Z}}\mu_N(h_1)\sum_{n\in J(h_1)}f_1(x-n)f_2(x+n)f_3(n)\overline{f_1(x-n-h_1)f_2(x+n+h_1)f_3(n+h_1)}
\\
\lesssim N^2\sum_{h_1\in\mathbb{Z}}\mu_N(h_1)\sum_{n\in J(h_1)}\sum_{y\in\mathbb{Z}}f_1(y)f_2(y+2n)f_3(n)\overline{f_1(y-h_1)f_2(y+2n+h_1)f_3(n+h_1)}
\\
\lesssim N^2\sum_{h_1\in\mathbb{Z}}\mu_N(h_1)\sum_{y\in\mathbb{Z}}\Delta_{-h_1}f_1(y)\sum_{n\in J(h_1)}\Delta_{h_1}f_2(y+2n)\Delta_{h_1} f_3(n)\text{,}
\end{multline}
where for the second estimate we have used van der Corput inequality, see Lemma~3.1 in \cite{Pre} or Lemma~3.1 in \cite{PelPre}, and where $J(h_1)\coloneqq [N]\cap([N]-h_1)$.
We repeat the procedure
\begin{multline}
\Big|\sum_{x\in\mathbb{Z}}\sum_{n\in\mathbb{Z}}f_0(x)f_1(x-n)f_2(x+n)f_3(n)\Big|^4
\\
\lesssim N^4\Big|\sum_{h_1\in\mathbb{Z}}\mu_N(h_1)\sum_{y\in\mathbb{Z}}\Delta_{-h_1}f_1(y)\sum_{n\in J(h_1)}\Delta_{h_1}f_2(y+2n)\Delta_{h_1} f_3(n)\Big|^2
\\
\lesssim N^4\Big(\sum_{h_1\in\mathbb{Z}}\mu_N(h_1)^2\Big)\Big(\sum_{h_1\in[-N,N]}\Big|\sum_{y\in\mathbb{Z}}\Delta_{-h_1}f_1(y)\sum_{n\in J(h_1)}\Delta_{h_1}f_2(y+2n)\Delta_{h_1} f_3(n)\Big|^2\Big)
\\
\lesssim  N^3\sum_{h_1\in[-N,N]}\Big|\sum_{y\in\mathbb{Z}}\Delta_{-h_1}f_1(y)\sum_{n\in J(h_1)}\Delta_{h_1}f_2(y+2n)\Delta_{h_1} f_3(n)\Big|^2
\\
\lesssim  N^3\sum_{h_1\in[-N,N]}\Big(\sum_{y\in\mathbb{Z}}|\Delta_{-h_1}f_1(y)|^2\Big)\Big(\sum_{y\in\mathbb{Z}}\Big|\sum_{n\in J(h_1)}\Delta_{h_1}f_2(y+2n)\Delta_{h_1} f_3(n)\Big|^2\Big)
\\
\lesssim  N^4\sum_{h_1\in[-N,N]}\sum_{y\in\mathbb{Z}}\Big|\sum_{n\in J(h_1)}\Delta_{h_1}f_2(y+2n)\Delta_{h_1} f_3(n)\Big|^2
\\
\lesssim N^5\sum_{h_1\in[-N,N]}\sum_{y\in\mathbb{Z}}\sum_{h_2\in\mathbb{Z}}\mu_N(h_2)\sum_{n\in J(h_1,h_2)}\Delta_{h_1,h_2}f_3(n)\cdot
\\
\cdot f_2(y+2n)\overline{f_2(y+2n+h_1)f_2(y+2n+2h_2)}f_2(y+2n+2h_2+h_1)
\\
\lesssim N^5\sum_{h_1\in[-N,N]}\sum_{h_2\in\mathbb{Z}}\mu_N(h_2)\sum_{n\in J(h_1,h_2)}\sum_{x\in\mathbb{Z}}\Delta_{h_1,h_2}f_3(n)\Delta_{h_1,2h_2}f_2(x)\text{,}
\end{multline}
where $J(h_1,h_2)=J(h_1)\cap (J(h_1)-h_2)$.
We repeat one final time
\begin{multline}
\Big|\sum_{x\in\mathbb{Z}}\sum_{n\in\mathbb{Z}}f_0(x)f_1(x-n)f_2(x+n)f_3(n)\Big|^8
\\
\lesssim N^{10}\Big|\sum_{h_1\in[-N,N]}\sum_{h_2\in\mathbb{Z}}\mu_N(h_2)\sum_{x\in\mathbb{Z}}\Delta_{h_1,2h_2}f_2(x)\sum_{n\in J(h_1,h_2)}\Delta_{h_1,h_2}f_3(n)\Big|^2
\\
\lesssim N^{10}\Big(\sum_{h_1\in[-N,N]}\sum_{h_2\in\mathbb{Z}}\mu_N(h_2)^2\Big)\Big(\sum_{h_1,h_2\in[-N,N]}\Big|\sum_{x\in\mathbb{Z}}\Delta_{h_1,2h_2}f_2(x)\sum_{n\in J(h_1,h_2)}\Delta_{h_1,h_2}f_3(n)\Big|^2\Big)
\\
\lesssim N^{12}\sum_{h_1,h_2\in[-N,N]}\Big|\sum_{n\in J(h_1,h_2)}\Delta_{h_1,h_2}f_3(n)\Big|^2
\\
\lesssim N^{13}\sum_{h_1,h_2\in[-N,N]}\sum_{h_3\in\mathbb{Z}}\mu_N(h_3)\sum_{n\in J(h_1,h_2,h_3)}\Delta_{h_1,h_2,h_3}f_3(n)\text{,}
\end{multline}
where $J(h_1,h_2,h_3)=J(h_1,h_2)\cap(J(h_1,h_2)-h_3)$. Taking into account the support of $\Delta_{h_1,h_2,h_3}f_3(n)$, the restriction in the final summation can be discarded. Thus, we get
\[
\Big|\sum_{x\in\mathbb{Z}}\sum_{n\in\mathbb{Z}}f_0(x)f_1(x-n)f_2(x+n)f_3(n)\Big|^8\le C N^{13} 
\sum_{h_3\in\mathbb{Z}}\mu_{N}(h_3)\sum_{h_1,h_2\in[-N,N]}\sum_{n\in\mathbb{Z}}\Delta_{h_1,h_2,h_3}f_3(n)\text{, as desired.}
\] 
\end{proof}
The second lemma will ultimately allow us to establish Proposition~$\ref{Sectionskey}$ by essentially providing a bound for the $U^3$-norm of a dyadic variant of our kernel. For every $l\in\mathbb{N}_0$ and $N\in\mathbb{N}$, let 
\[
K_{N,l}(n)=1_{[2^l,\min(2^{l+1},N+1))}(n)K_{N}(n)\text{.}
\]We remind the reader that $M$ is defined in $\ref{choicesforsme}$.
\begin{lemma}\label{smallgain}
Let $c\in[1,23/22)$, $h\in\mathcal{R}_c$ and $\kappa\in(0,1]$. Then there exists a positive constant $C=C(h,\kappa)$ such that for every $N\in\mathbb{N}$ and $l\in[0,\log_2(N+1)]\cap\mathbb{Z}$ we have 
\[
\sum_{h_3\in\mathbb{Z}}\mu_{N}(h_3)\sum_{h_1,h_2\in[-N,N]}\sum_{n\in\mathbb{Z}}\Delta_{h_1,h_2,h_3}K_{N,l}(n)\le C \Big(N^2\varphi(N)^{-8+\kappa}+N\varphi(N)^{-8}2^{-\frac{4l}{3}}\sigma(2^l)^{-\frac{2}{3}}\varphi(2^l)^{\frac{10-2\kappa}{3}}M^{\frac{16}{3}}\Big)\text{.}
\]
\end{lemma}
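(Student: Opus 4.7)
The strategy is to interpret the left-hand side as a weighted $U^2$-norm average, reduce via Plancherel to bounding $\|\widehat{\Delta_{h_3}K_{N,l}}\|_\infty$, and estimate this Fourier transform via van der Corput applied to a double $m$-sum of oscillatory sums.

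Since $K_{N,l}$ is supported in an interval of length at most $2^l\le N$, any nonzero term $\Delta_{h_1,h_2,h_3}K_{N,l}(n)$ forces $|h_1|,|h_2|\lesssim 2^l\le N$ automatically, so the restriction $h_1,h_2\in[-N,N]$ is vacuous and
\[
\sum_{h_1,h_2\in[-N,N]}\sum_n\Delta_{h_1,h_2,h_3}K_{N,l}(n)=\|\Delta_{h_3}K_{N,l}\|_{U^2}^4.
\]
We thus need to bound $\sum_{h_3}\mu_N(h_3)\|\Delta_{h_3}K_{N,l}\|_{U^2}^4$. By the Plancherel-type identity $\|g\|_{U^2}^4=\int_{\mathbb{T}}|\hat g(\xi)|^4\,d\xi\le\|\hat g\|_\infty^2\|g\|_2^2$, together with the pointwise estimate $|K_{N,l}(n)|\lesssim\varphi(N)^{-1}$ (which follows from $\|\Phi\|_\infty\le 1/2$ and the boundedness of $g_M$ in Lemma~$\ref{TrFourier}$), one obtains $\sum_{h_3}\mu_N(h_3)\|\Delta_{h_3}K_{N,l}\|_2^2\lesssim 2^l/\varphi(N)^4$, so it remains to produce appropriate uniform upper bounds on $\sup_{h_3}\|\widehat{\Delta_{h_3}K_{N,l}}\|_\infty$.

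Expanding the Fourier transform yields
\[
|\widehat{\Delta_{h_3}K_{N,l}}(\xi)|\lesssim\varphi(N)^{-2}\sum_{0<|m_1|,|m_2|\le M}\frac{1}{|m_1m_2|}\bigl|S_{m_1,m_2,h_3}(\xi)\bigr|,
\]
where $S_{m_1,m_2,h_3}(\xi)$ is an exponential sum over $n\in [2^l,2^{l+1})\cap([2^l,2^{l+1})-h_3)$, with smooth $O(1)$-amplitude of controlled total variation arising from the $\psi_m$-factors and with phase $\Psi(n)=m_1\varphi(n)-m_2\varphi(n+h_3)-n\xi$. After extracting the amplitude by Abel summation, van der Corput's inequality applies. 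The second $n$-derivative of $\Psi$ is $\asymp(m_1-m_2)\varphi''(n)$ in the off-diagonal case $m_1\neq m_2$ and $\asymp m_1 h_3\varphi'''(n)$ in the diagonal case $m_1=m_2$; by the derivative estimates for $c$-regularly varying functions (Lemma~2.14 in \cite{MMR}) these have sizes $\asymp|m_1-m_2|\varphi(2^l)/(2^{2l}\sigma(2^l))$ and $\asymp|m_1 h_3|\varphi(2^l)/(2^{3l}\sigma(2^l))$, respectively. Balancing the two van der Corput terms $L\lambda^{1/2}$ and $\lambda^{-1/2}$ (with $L\le 2^l$) and summing against the $(m_1m_2)^{-1}$-weight and the $\mu_N(h_3)$-average then produces the two contributions in the statement: the first term $N^2\varphi(N)^{-8+\kappa}$ comes from the diagonal, together with the small-$|m_1-m_2|$ regime, where $\varphi(N)^{\kappa}$ absorbs the logarithmic loss from $\sum_{0<|m|\le M}|m|^{-2}$ and, for $c=1$, the $\varepsilon$-loss $\sigma^{-1}\lesssim_\varepsilon x^\varepsilon$; the second term, with its fractional exponents $16/3,10/3,4/3,2/3$, comes from the off-diagonal van der Corput estimate.

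The main obstacle is the coupled bookkeeping of the $(m_1,m_2)$-double sum with the $(m_1m_2)^{-1}$-weights against van der Corput's inequality: the specific fractional exponents $16/3,10/3,4/3,2/3$ emerge from the optimal crossover $L\lambda^{1/2}\simeq\lambda^{-1/2}$ in the off-diagonal estimate, and the boundary between the diagonal and off-diagonal regimes must be handled carefully so as not to double count.
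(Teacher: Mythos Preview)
Your setup through the $U^2$ reduction is correct and matches the paper's, but the exponential-sum analysis has real gaps. The role of $\kappa$ is not what you describe: the paper splits on $|h_3|$, not on the $(m_1,m_2)$-diagonal. For $|h_3|<\varphi(2^l)^\kappa$ it uses the \emph{trivial} bound $\big|\sum_n\Delta_{h_3}L_{N,l}(n)e(n\xi)\big|\lesssim 2^l\le N$ and gains solely from the small count of such $h_3$; this produces $N\varphi(N)^{-8}\cdot\varphi(N)^\kappa N^{-1}N^2=N^2\varphi(N)^{-8+\kappa}$. Your diagonal bound, by contrast, is uniform in $h_3$: with $|\psi_m|\le 2$ the diagonal contributes $\sum_m m^{-2}\cdot 2^l\lesssim 2^l$ to $\|\widehat{\Delta_{h_3}L_{N,l}}\|_\infty$, and after squaring, multiplying by $\|\Delta_{h_3}K_{N,l}\|_2^2\lesssim 2^l\varphi(N)^{-4}$, and summing against $\sum_{h_3}\mu_N(h_3)=1$, one gets $2^{3l}\varphi(N)^{-8}$. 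For the top scale $2^l\sim N$ this is $N^3\varphi(N)^{-8}$, which exceeds $N^2\varphi(N)^{-8+\kappa}$ since $\varphi(N)^\kappa\lesssim N^{\kappa/c}<N$. The factor $\varphi(N)^\kappa$ is a genuine power of $N$ coming from the small-$h_3$ count, not a logarithmic absorber.

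Second, your off-diagonal second-derivative analysis is incorrect in detail and cannot produce the stated exponents. The second derivative of $\Psi$ is $m_1\varphi''(n)-m_2\varphi''(n+h_3)$, which is not $\asymp(m_1-m_2)\varphi''(n)$: for $n,n+h_3\in[2^l,2^{l+1})$ the ratio $\varphi''(n+h_3)/\varphi''(n)$ sweeps an interval near $[2^{1/c-2},1]$, which contains rationals like $1/2,2/3,\dots$, so $\Psi''$ can vanish even when $m_1\ne m_2$. Moreover the bound $L\lambda^{1/2}+\lambda^{-1/2}$ yields half-integer exponents, not the thirds in the statement. For $|h_3|\ge\varphi(2^l)^\kappa$ the paper instead invokes a \emph{third}-derivative Weyl--van der Corput estimate (Corollary~3.12 of \cite{WT11}) tailored to precisely such phases; the lower bound on $|h_3|$ is what makes that estimate effective and is what introduces $\kappa$ into the exponent $\varphi(2^l)^{(5-\kappa)/3}$ of Lemma~\ref{wt11est}, whose square gives $\varphi(2^l)^{(10-2\kappa)/3}$ here.
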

\begin{proof}
It will be more convenient to work with an unnormalized kernel, so let us define
\begin{equation}\label{LNdef}
L_{N,l}(n)=\lfloor \varphi(N)\rfloor K_{N,l}(n)=1_{[2^l,\min(2^{l+1},N+1))}(n)\sum_{0<|m|\le M}\frac{e(m\varphi(n+1))-e(m\varphi(n))}{2\pi i m}\text{.}
\end{equation}
By Lemma~$\ref{TrFourier}$ we get that $|L_{N,l}(n)|\lesssim 1$, and the implied constant is absolute. For any $h_3\in\mathbb{Z}$ we have
\[
\sum_{|h_1|,|h_2|\le N}\sum_{n\in\mathbb{Z}}\Delta_{h_1,h_2,h_3}K_{N,l}(n)=\sum_{x,h_1,h_2\in\mathbb{Z}}\Delta_{h_1,h_2}\big(\Delta_{h_3}K_{N,l}\big)(x)=\|\Delta_{h_3}K_{N,l}\|_{U^2}^4=\lfloor \varphi(N)\rfloor^{-8}\|\Delta_{h_3}L_{N,l}\|_{U^2}^4\text{.}
\]
By the inverse theorem for the $U^2$-norm, see Lemma~A.1 in \cite{PelPre}, there exists $\xi_{h_{3},N,l}\in\mathbb{T}$ such that
\[
\|\Delta_{h_3}L_{N,l}\|_{U^2}^4\lesssim N \Big|\sum_{x\in\mathbb{Z}}\big(\Delta_{h_3}L_{N,l}(x)\big)e(x\xi_{h_{3},N,l})\Big|^2\text{,}\quad \text{where we have used that $|L_{N,l}(n)|\lesssim 1$.}
\]
We have shown that for each $h_3\in[-N,N]$ and $l\in[0,\log_2(N+1)]\cap\mathbb{Z}$ there exists $\xi_{h_{3},N,l}\in\mathbb{T}$ such that
\begin{equation}\label{Return}
\sum_{h_3\in\mathbb{Z}}\mu_{N}(h_3)\sum_{h_1,h_2\in[-N,N]}\sum_{n\in\mathbb{Z}}\Delta_{h_1,h_2,h_3}K_{N,l}(n)\lesssim \varphi(N)^{-8}N \sum_{h_3\in\mathbb{Z}}\mu_{N}(h_3)\Big|\sum_{x\in\mathbb{Z}}\big(\Delta_{h_3}L_{N,l}(x)\big)e(x\xi_{h_{3},N,l})\Big|^2\text{,}
\end{equation}
and it will suffice to appropriately bound $\sup_{\xi\in\mathbb{T}}\big|\sum_{x\in\mathbb{Z}}\big(\Delta_{h_3}L_{N,l}(x)\big)e(x\xi)\big|$ for most $h_3$'s. More precisely, we establish the following to conclude.
\begin{lemma}\label{wt11est}Let $c\in[1,23/22)$, $h\in\mathcal{R}_c$ and $\kappa\in(0,1]$. Then there exists a positive constant $C=C(h,\kappa)$ such that for every $N\in\mathbb{N}$, $l\in[0,\log_2(N+1)]\cap\mathbb{Z}$ and $h_3\in\mathbb{Z}$ with $|h_3|\ge \varphi(2^l)^\kappa$, we get 
\[
\Big\|\sum_{x\in\mathbb{Z}}\big(\Delta_{h_3}L_{N,l}(x)\big)e(x\xi)\Big\|_{L_{d\xi}^{\infty}(\mathbb{T})}\le C2^{-\frac{2l}{3}}\sigma(2^l)^{-\frac{1}{3}}\varphi(2^l)^{\frac{5-\kappa}{3}}M^{\frac{8}{3}}\text{.}
\] 
\end{lemma}
\begin{proof}
Let $\xi\in\mathbb{T}$, $N\in\mathbb{N}$, $\kappa\in(0,1]$, $l\in[0,\log_2(N+1)]\cap\mathbb{Z}$ and assume that $h_3\in\mathbb{Z}$ with $|h_3|\ge \varphi(2^l)^\kappa$. Note that
\begin{multline}
\big(\Delta_{h_3}L_{N,l}(n)\big)e(n\xi)=L_{N,l}(n)\overline{L_{N,l}(n+h_3)}e(n\xi)
\\
=1_{2^l\le n, n+h_3<\min(2^{l+1},N+1)}\sum_{0<|m_1|\le M}\frac{e(m_1 \varphi(n))\psi_{m_1}(n)}{2\pi i m_1}\sum_{0<|m_2|\le M}\frac{e(-m_2 \varphi(n+h_3))\overline{\psi_{m_2}(n+h_3)}}{-2\pi i m_2}e(n\xi)
\\
=1_{2^l\le n, n+h_3<\min(2^{l+1},N+1)}\sum_{0<|m_1|,|m_2|\le M}\frac{e(m_1 \varphi(n)-m_2 \varphi(n+h_3)+n\xi)\psi_{m_1}(n)\overline{\psi_{m_2}(n+h_3)}}{4\pi^2 m_1m_2}\text{,}
\end{multline}
where $\psi_m(n)=e\big(m(\varphi(n+1)-\varphi(n))\big)-1$.
Thus we get
\begin{multline}\label{h_3positive}
\Big|\sum_{n\in\mathbb{Z}}\big(\Delta_{h_3}L_{N,l}(n)\big)e(n\xi)\Big|
\\
\lesssim
\sum_{0<|m_1|,|m_2|\le M}\frac{1}{|m_1m_2|}\Big|\sum_{n,n+h_3\in[2^l,\min(2^{l+1},N+1))}e(m_1 \varphi(n)-m_2 \varphi(n+h_3)+n\xi)\psi_{m_1}(n)\overline{\psi_{m_2}(n+h_3)}\Big|\text{.}
\end{multline}
We firstly assume that $h_3>0$. We apply Corollary~3.12 from \cite{WT11}, which yields the following estimate
\begin{multline}
\bigg|\sum_{n,n+h_3\in[2^l,\min(2^{l+1},N+1))}e(m_1 \varphi(n)-m_2 \varphi(n+h_3)+n\xi)\psi_{m_1}(n)\overline{\psi_{m_2}(n+h_3)}\bigg|
\\
\lesssim \max\{|m_1|,|m_2|\}^{2/3}2^{4l/3}\sigma(2^l)^{-1/3}\varphi(2^l)^{-(1+\kappa)/3}\Big(|m_1m_2|\varphi(2^l)^22^{-2l}+2^l|m_1m_2|\varphi(2^l)^22^{-3l}\Big)
\\
=|m_1m_2|\max\{|m_1|,|m_2|\}^{2/3}2^{-2l/3}\sigma(2^l)^{-1/3}\varphi(2^l)^{(5-\kappa)/3}
\text{,}
\end{multline}
where the bound can be derived through standard estimates for $\psi_m$, see page 37 in \cite{WT11LD} for detailed calculations. Returning to $\ref{h_3positive}$ we get
\begin{equation}\label{h3pored}
\Big|\sum_{n\in\mathbb{Z}}\big(\Delta_{h_3}L_{N,l}(n)\big)e(n\xi)\Big|\lesssim
\sum_{0<|m_1|,|m_2|\le M}\max\{|m_1|,|m_2|\}^{2/3}2^{-2l/3}\sigma(2^l)^{-1/3}\varphi(2^l)^{(5-\kappa)/3}\text{.}
\end{equation}
We note that for $h_3<0$ we may perform a change of variables in $\ref{h_3positive}$ to obtain
\begin{multline}\label{h_3negative}
\Big|\sum_{n\in\mathbb{Z}}\big(\Delta_{h_3}L_{N,l}(n)\big)e(n\xi)\Big|
\\
\lesssim
\sum_{0<|m_1|,|m_2|\le M}\frac{1}{|m_1||m_2|}\bigg|\sum_{k,k-h_3\in[2^l,\min(2^{l+1},N+1))}e(m_1 \varphi(k-h_3)-m_2 \varphi(k)+n\xi)\psi_{m_1}(k-h_3)\overline{\psi_{m_2}(k)}\bigg|\text{,}
\end{multline}
and one may apply the same argument as before with $h_3$ replaced by $-h_3>0$, resulting in the same bound $\ref{h3pored}$. Now we may estimate as follows 
\begin{multline}\label{h3porednew}
\Big|\sum_{n\in\mathbb{Z}}\big(\Delta_{h_3}L_{N,l}(n)\big)e(n\xi)\Big|\lesssim
2^{-2l/3}\sigma(2^l)^{-1/3}\varphi(2^l)^{(5-\kappa)/3}\sum_{0<|m_1|,|m_2|\le M}\max\{|m_1|,|m_2|\}^{2/3}
\\
\lesssim
2^{-\frac{2l}{3}}\sigma(2^l)^{-\frac{1}{3}}\varphi(2^l)^{\frac{5-\kappa}{3}}M^{\frac{8}{3}}\text{,}
\end{multline}
and the proof is complete.
\end{proof}
Returning  back to $\ref{Return}$, one concludes as follows
\begin{multline}
\sum_{h_3\in\mathbb{Z}}\mu_{N}(h_3)\sum_{h_1,h_2\in[-N,N]}\sum_{n\in\mathbb{Z}}\Delta_{h_1,h_2,h_3}K_{N,l}(n)\lesssim \frac{N}{\varphi(N)^{8}} \sum_{h_3\in[-N,N]}\mu_{N}(h_3)\Big|\sum_{x\in\mathbb{Z}}\big(\Delta_{h_3}L_{N,l}(x)\big)e(x\xi_{h_{3},N,l})\Big|^2
\\
\le N\varphi(N)^{-8} \sum_{h_3\in[-\varphi(2^l)^{\kappa},\varphi(2^l)^{\kappa}]}\mu_{N}(h_3)\Big|\sum_{x\in\mathbb{Z}}\big(\Delta_{h_3}L_{N,l}(x)\big)e(x\xi_{h_{3},N,l})\Big|^2
\\
+N\varphi(N)^{-8} \sum_{|h_3|\in(\varphi(2^l)^{\kappa},N]}\mu_{N}(h_3)\Big|\sum_{x\in\mathbb{Z}}\big(\Delta_{h_3}L_{N,l}(x)\big)e(x\xi_{h_{3},N,l})\Big|^2
\\
\lesssim N\varphi(N)^{-8}\varphi(N)^{\kappa}N^{-1}N^2+N\varphi(N)^{-8}2^{-\frac{4l}{3}}\sigma(2^l)^{-\frac{2}{3}}\varphi(2^l)^{\frac{10-2\kappa}{3}}M^{\frac{16}{3}}
\\
N^2\varphi(N)^{-8+\kappa}+N\varphi(N)^{-8}2^{-\frac{4l}{3}}\sigma(2^l)^{-\frac{2}{3}}\varphi(2^l)^{\frac{10-2\kappa}{3}}M^{\frac{16}{3}}
\text{.}
\end{multline}
This concludes the proof of Lemma~$\ref{smallgain}$.
\end{proof}
We now are ready to prove Proposition~$\ref{Sectionskey}$.
\begin{proof}[Proof of Proposition $\ref{Sectionskey}$]
By Lemma~$\ref{U3control}$ and Lemma~$\ref{smallgain}$ we get
\begin{multline}\label{dyadicfinalest}
\Big|\sum_{n\in\mathbb{Z}}\sum_{x\in\mathbb{Z}}f_0(x)f_1(x-n)f_2(x+n)K_N(n)\Big|
\\
\lesssim\sum_{0\le l\le\log_2(N+1)}\Big|\sum_{n\in\mathbb{Z}}\sum_{x\in\mathbb{Z}}f_0(x)f_1(x-n)f_2(x+n)\big(K_N(n)1_{[2^l,\min(2^{l+1},N+1))}(n)\big)\Big|
\\
\lesssim\sum_{0\le l\le\log_2(N+1)}N^{\frac{13}{8}}\Big( 
\sum_{h_3\in\mathbb{Z}}\mu_{N}(h_3)\sum_{h_1,h_2\in[-N,N]}\sum_{n\in\mathbb{Z}}\Delta_{h_1,h_2,h_3}K_{N,l}(n)\Big)^{\frac{1}{8}}
\\
\lesssim \sum_{0\le l\le\log_2(N+1)}N^{\frac{13}{8}} \Big(N^{\frac{2}{8}}\varphi(N)^{-1+\frac{\kappa}{8}}+N^{\frac{1}{8}}\varphi(N)^{-1}2^{-\frac{l}{6}}\sigma(2^l)^{-\frac{1}{12}}\varphi(2^l)^{\frac{5-\kappa}{12}}M^{\frac{2}{3}}\Big)
\\
\lesssim \log(N)N^{\frac{15}{8}}\varphi(N)^{-1+\frac{\kappa}{8}}+\sum_{0\le l\le\log_2(N+1)}N^{\frac{14}{8}} \varphi(N)^{-1}2^{-\frac{l}{6}}\sigma(2^l)^{-1/12}\varphi(2^l)^{\frac{5-\kappa}{12}}M^{\frac{2}{3}}
\\
\lesssim N\big(\log(N)N^{\frac{7}{8}}\varphi(N)^{-1+\frac{\kappa}{8}}\big)+N^{\frac{7}{4}}\varphi(N)^{-1}M^{\frac{2}{3}} \sum_{0\le l\le\log_2(N+1)}\sigma(2^l)^{-\frac{1}{12}}2^{-\frac{l}{6}}\varphi(2^l)^{\frac{5-\kappa}{12}}\text{.}
\end{multline}
We choose $\kappa\coloneqq \frac{9c-6}{5}$, note that this is possible since $\frac{9c-6}{5}<1\iff c<11/9$. For the first summand above we note that
\[
\frac{7}{8}-\frac{1}{c}+\frac{\kappa}{8c}=\frac{22c-23}{20c}<0\text{,}
\]
since $c<23/22$ and thus $N\big(\log(N)N^{\frac{7}{8}}\varphi(N)^{-1+\frac{\kappa}{8}}\big)\lesssim N^{1-\chi}$ for some $\chi=\chi(c)>0$. For the second summand firstly note that for all $\varepsilon>0$ we have 
\begin{multline}
\sum_{0\le l\le\log_2(N+1)}\sigma(2^l)^{-\frac{1}{12}}2^{-\frac{l}{6}}\varphi(2^l)^{\frac{5-\kappa}{12}}\lesssim_{\varepsilon}\sum_{0\le l\le\log_2(N+1)} 2^{l\varepsilon}2^{-\frac{l}{6}}((2^l)^{\frac{1}{c}+\varepsilon})^{\frac{5-\kappa}{12}}
\\
=\sum_{0\le l\le\log_2(N+1)} 2^{l\big(\varepsilon-\frac{1}{6}+(\frac{1}{c}+\varepsilon)(\frac{5-\kappa}{12})\big)}\lesssim N^{\big(\varepsilon-\frac{1}{6}+(\frac{1}{c}+\varepsilon)(\frac{5-\kappa}{12})\big)}\lesssim N^{-\frac{1}{6}+\frac{5}{12c}-\frac{\kappa}{12c}+2\varepsilon}\text{.}
\end{multline}
Note that the penultimate estimate holds since
\begin{equation}\label{numerologyforsecondest}
-\frac{1}{6}+\frac{5}{12c}-\frac{\kappa}{12c}=\frac{-19c+31}{60c}>0\text{,}
\end{equation}
since $c<31/19$. Thus applying the above estimate for $\varepsilon=\varepsilon_0/12$ yields the following bounds for the second summand in the last line of $\ref{dyadicfinalest}$
\begin{multline}
N^{\frac{7}{4}}\varphi(N)^{-1}M^{\frac{2}{3}} \sum_{0\le l\le\log_2(N+1)}\sigma(2^l)^{-\frac{1}{12}}2^{-\frac{l}{6}}\varphi(2^l)^{\frac{5-\kappa}{12}}\lesssim N^{\frac{7}{4}}N^{-\frac{1}{c}+\frac{\varepsilon_0}{6}}N^{\frac{2}{3}-\frac{2}{3c}+\frac{2\varepsilon_0}{3}}N^{-\frac{1}{6}+\frac{5}{12c}-\frac{\kappa}{12c}+\frac{\varepsilon_0}{6}} 
\\
=NN^{\frac{22c-23}{20c}+\varepsilon_0}=NN^{\frac{44c-46}{40c}+\frac{23-22c}{40c}}=NN^{\frac{22c-23}{40c}}\text{.}
\end{multline}
Since $\frac{22c-23}{40c}<0$, combining the estimates for both summands in the last line of $\ref{dyadicfinalest}$ we get that there exists $\chi=\chi(c)>0$ such that 
\[
\Big|\sum_{n\in\mathbb{Z}}\sum_{x\in\mathbb{Z}}f_0(x)f_1(x-n)f_2(x+n)K_N(n)\Big|\lesssim N^{1-\chi}\text{,}
\]
as desired, and the proof is complete.
\end{proof}
\section{Concluding the proof of Theorem~$\ref{MTHM}$}\label{finSection}
Here we explain how to use Proposition~$\ref{Sectionskey}$ to establish the first assertion of Proposition~$\ref{finalreductionE1andE2}$ and conclude the proof of Theorem~$\ref{MTHM}$. Bounds of the form $\ref{boundoftheform}$ immediately yield the $L^1$-bound in $\ref{L1saving}$, which, in turn, will yield the desired result. The following proposition should be understood as a simple instance of Calder\'on's transference principle utilizing Proposition~$\ref{Sectionskey}$.
\begin{proposition}\label{L1controlEN}Assume $c\in[1,23/22)$ and $h\in\mathcal{R}_c$. Let $(X,\mathcal{B},\mu)$ be a probability space and $T\colon X\to X$ an invertible $\mu$-invariant transformation. Let $f,g\colon X\to\mathbb{C}$ be 1-bounded functions. Then there exist positive constants $C=C(h)$ and $\chi=\chi(h)$ such that
\begin{equation}\label{L1saving}
\|E_N^{(1)}(f,g)\|_{L^1_{\mu}(X)}\le CN^{-\chi}\text{.}
\end{equation}
\end{proposition}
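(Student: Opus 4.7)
The plan is a standard Calder\'on transference argument to convert the $L^1$ bound into the trilinear integer estimate provided by Proposition~$\ref{Sectionskey}$. First I would dualize: since $\|E^{(1)}_N(f,g)\|_{L^1_\mu(X)}=\sup_{\phi}\big|\int_X\phi\cdot E^{(1)}_N(f,g)\,d\mu\big|$, where the supremum is over $1$-bounded measurable $\phi\colon X\to\mathbb{C}$, it suffices to bound
\[
I(\phi)\coloneqq \int_X\phi(x)\sum_{n\in\mathbb{Z}}K_N(n)f(T^nx)g(T^{-n}x)\,d\mu(x)
\]
by $CN^{-\chi}$ uniformly in such $\phi$.

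Next I would exploit $\mu$-invariance of $T$. For any integer $m$ the change of variables $x\mapsto T^mx$ gives $I(\phi)=\int_X\phi(T^mx)\sum_n K_N(n)f(T^{m+n}x)g(T^{m-n}x)\,d\mu$. Averaging this identity over $m\in[-N,N]$ yields
\[
I(\phi)=\frac{1}{2N+1}\int_X\Bigg(\sum_{m\in[-N,N]}\phi(T^mx)\sum_{n\in\mathbb{Z}}K_N(n)f(T^{m+n}x)g(T^{m-n}x)\Bigg)d\mu(x).
\]
For a fixed $x$, define the $1$-bounded functions $F_0,F_1,F_2\colon\mathbb{Z}\to\mathbb{C}$ by
\[
F_0(j)=\phi(T^jx)\mathds{1}_{[-N,N]}(j),\qquad F_1(j)=f(T^jx)\mathds{1}_{[-2N,2N]}(j),\qquad F_2(j)=g(T^jx)\mathds{1}_{[-2N,2N]}(j).
\]
Since $K_N$ is supported in $[1,N]$, for $m\in[-N,N]$ and $n\in\operatorname{supp}K_N$ we have $m,m+n,m-n\in[-2N,2N]$, so inserting the indicators does not alter the inner sum. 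After relabeling ($f_0=F_0$, $f_1=F_2$, $f_2=F_1$), the inner double sum is exactly the trilinear form treated in Proposition~$\ref{Sectionskey}$ with $S=2$.

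Applying Proposition~$\ref{Sectionskey}$ produces a pointwise-in-$x$ bound of the form $CN^{1-\chi}$ with constants $C=C(h,2)$ and $\chi=\chi(h)$ independent of $\phi,f,g,x$. Integrating over $X$ (a probability space) and dividing by $2N+1$ gives $|I(\phi)|\le C N^{1-\chi}/(2N+1)\lesssim N^{-\chi}$. Taking the supremum over $\phi$ then yields the claimed bound $\|E^{(1)}_N(f,g)\|_{L^1_\mu(X)}\lesssim N^{-\chi}$. The only subtlety worth flagging is bookkeeping of supports: one must choose the length of the averaging window ($2N+1$) to match the $N$-gain in Proposition~$\ref{Sectionskey}$ while keeping all three functions inside a common window of radius $\lesssim N$, so that $S$ is an absolute constant. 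Once this is arranged, no further work is needed beyond quoting the proposition.
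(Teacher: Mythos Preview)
Your proposal is correct and follows essentially the same Calder\'on transference argument as the paper: dualize, average using $\mu$-invariance over a window of length $\simeq N$, truncate to obtain $1$-bounded functions supported in $[-SN,SN]$ with $S=2$, and invoke Proposition~\ref{Sectionskey}. The only cosmetic difference is that the paper averages over $m\in[N]$ rather than $m\in[-N,N]$, which is immaterial.
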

\begin{proof}
By duality and $\mu$-invariance there exists a 1-bounded function $l$ such that
\begin{multline}
\|E^{(1)}_N(f,g)\|_{L^1_{\mu}(X)}=\int_Xl(x)E^{(1)}_N(f,g)(x)d\mu(x)=\int_Xl(x)\sum_{n\in\mathbb{Z}}K_N(n)f(T^nx)g(T^{-n}x)d\mu(x)
\\
=\frac{1}{N}\sum_{m\in[N]}\int_Xl(T^mx)\sum_{n\in\mathbb{Z}}K_N(n)f(T^{n+m}x)g(T^{-n+m}x)d\mu(x)
\\
\le\int_X\frac{1}{N}\Big|\sum_{m\in\mathbb{Z}}\sum_{n\in\mathbb{Z}}\big(l(T^mx)1_{[N]}(m)\big)\big(f(T^{n+m}x)1_{[-2N,2N]}(n+m)\big)\cdot
\\
\cdot\big(g(T^{m-n}x)1_{[-2N,2N]}(m-n)\big)K_N(n)\Big|d\mu(x)\text{.}
\end{multline}
For every fixed $x\in X$, we apply Proposition~$\ref{Sectionskey}$ to the obvious functions to conclude that
\begin{multline}
\int_X\frac{1}{N}\Big|\sum_{m\in\mathbb{Z}}\sum_{n\in\mathbb{Z}}\big(l(T^mx)1_{[N]}(m)\big)\big(f(T^{n+m}x)1_{[-2N,2N]}(n+m)\big)\cdot
\\
\cdot\big(g(T^{m-n}x)1_{[-2N,2N]}(m-n)\big)K_N(n)\Big|d\mu(x)\lesssim N^{-\chi}\text{,} 
\end{multline}
and the proof is complete.
\end{proof}
Finally, we explain how to use Proposition~$\ref{L1controlEN}$ to prove Proposition~$\ref{finalreductionE1andE2}$(i), as well as how to conclude the proof of Theorem~$\ref{MTHM}$.
\begin{proof}[Proof of Proposition~$\ref{finalreductionE1andE2}$\normalfont{(i)}.] Fix $\lambda\in(1,2]$; using Proposition~$\ref{L1controlEN}$ it is easy to see that
\[
\Big\|\sum_{k\in\mathbb{N}_0}\big|E^{(1)}_{\lfloor \lambda^k\rfloor}(f,g)\big|\Big\|_{L^1_{\mu}(X)}\le\sum_{k\in\mathbb{N}_0}\|E^{(1)}_{\lfloor \lambda^k\rfloor}(f,g)\|_{L^1_{\mu}(X)}\lesssim \sum_{k\in\mathbb{N}_0}\lambda^{-\chi k}<\infty\text{,}
\]
and thus $\sum_{k\in\mathbb{N}_0}\big|E_{\lfloor \lambda^k\rfloor}(f,g)(x)\big|<\infty$ for $\mu$-a.e. $x\in X$, which, in turn, implies that
\[
\lim_{k\to\infty}E_{\lfloor \lambda^k\rfloor}(f,g)(x)=0\quad\text{for $\mu$-a.e. $x\in X$, as desired.}
\]
\end{proof}
\begin{proof}[Proof of Theorem~$\ref{MTHM}$.] It suffices to prove that for every $\lambda\in(1,2]$ we have
\begin{equation}\label{lambdavariant}
\lim_{k\to\infty}B_{\lfloor\lambda^k\rfloor}(f,g)(x)=\lim_{k\to\infty}A_{\lfloor\lambda^k\rfloor}(f,g)(x)\quad\text{for $\mu$-a.e. $x\in X$.}
\end{equation}
We note that the argument in the proof of Proposition~$\ref{basicstep1red}$ together with Proposition~$\ref{finalreductionE1andE2}$ yield that for $\mu$-a.e. $x\in X$ we have
\begin{multline}
\limsup_{k\to\infty}\big|B_{\lfloor\lambda^k\rfloor}(f,g)(x)-A_{\lfloor\lambda^k\rfloor}(f,g)(x)\big|\le\limsup_{k\to\infty}|E_{\lfloor\lambda^k\rfloor}(f,g)(x)|
\\
\le \limsup_{k\to\infty}|E^{(1)}_{\lfloor\lambda^k\rfloor}(f,g)(x)|+\limsup_{k\to\infty}|E^{(2)}_{\lfloor\lambda^k\rfloor}(f,g)(x)|=0\text{,}
\end{multline}
and thus $\ref{lambdavariant}$ is established and the proof of Theorem~$\ref{MTHM}$ is complete.
\end{proof}


\begin{thebibliography}{99}

\bibitem{ncfull}
		M. Boshernitzan, M. Wierdl,
		\emph{Ergodic theorems along sequences and Hardy fields.} Proceedings of the National Academy of Sciences, \textbf{93} (1996), pp. 8205–8207. \url{https://www.pnas.org/doi/abs/10.1073/pnas.93.16.8205}.
		

\bibitem{BDR}
J. Bourgain,
\emph{Double recurrence and almost sure convergence.} J. Reine Angew. Math. 404 (1990), 140–161.

\bibitem{PHI}
D. Heath–Brown,
\emph{The Pjateckii–Sapiro prime number theorem.} J. Number Theory 16 (1983), 242–266.

\bibitem{WT11LD}
L. Daskalakis,
\emph{Weak-type (1,1) inequality for discrete maximal functions and pointwise ergodic theorems along thin arithmetic sets.}
J Fourier Anal Appl 30, 37 (2024).

\bibitem{probvariant}
N. Frantzikinakis, E. Lesigne, M. Wierdl,
\emph{Random sequences and pointwise convergence of multiple
ergodic averages.} Indiana Univ. Math. J., 61(2):585–617, 2012.

\bibitem{KTM}
B. Krause, M. Mirek, T. Tao,
\emph{Pointwise ergodic theorems for non-conventional bilinear polynomial
averages.} Ann. of Math. 195 (2022), no. 3, pp. 997–1109.

\bibitem{Toeplitz}
G. H. Hardy,
\emph{Divergent Series.} (1949) Oxford: Clarendon Press.

\bibitem{LPE}
A. Iosevich, B. Langowski, M. Mirek, T. Z. Szarek,
\emph{Lattice points problem, equidistribution and ergodic theorems for certain arithmetic spheres.} Mathematische Annalen 388, (2024), pp. 2041–2120.

\bibitem{IWKO}
H. Iwaniec, E. Kowalski,
\emph{Analytic Number Theory.} Vol. 53, Amer. Math. Soc. Colloquium Publications, Providence RI, (2004).

\bibitem{KPMW}
D. Kosz, M. Mirek, S. Peluse, J. Wright,
\emph{The multilinear circle method and a question of Bergelson.} arXiv:2411.09478.

\bibitem{Lacey32}
M. Lacey,
\emph{The bilinear maximal functions map into $L^p$ for $2/3<p\le1$.} Ann. of Math. 151 (2000),
no. 1, pp. 35–57.

\bibitem{WT11}
M. Mirek,
\emph{Weak type (1, 1) inequalities for discrete rough maximal functions.} J. Anal. Mat. 127 (2015), 303–337.

\bibitem{MMR}
M. Mirek, 
\emph{Roth's Theorem in the Piatetski--Shapiro primes.} Rev. Mat. Iberoam. 31 (2015), no. 2, 617–656.

\bibitem{PelPre}
S. Peluse, S. Prendiville,
\emph{Quantitative bounds in the nonlinear Roth theorem.} Invent. math. 238, 865–903 (2024). https://doi.org/10.1007/s00222-024-01293-x

\bibitem{Pre}
S. Prendiville,
\emph{Quantitative bounds in the polynomial Szemer\'edi theorem: the homogeneous
case.} Discrete Anal., pages 34, Paper No. 5, 2017.
\end{thebibliography}
\end{document}